\documentclass{article}

\usepackage[a4paper,top=2cm,bottom=2cm,left=3cm,right=3cm,marginparwidth=1.75cm]{geometry}

\usepackage[T1]{fontenc}
\usepackage{lmodern}
\usepackage{pgf}
\usepackage{pifont}
\usepackage{ocg-p}	
\usepackage{amsmath,amsthm,amssymb,enumerate}
\usepackage{mathtools}
\usepackage{centernot}
\usepackage{multirow,booktabs}
\usepackage{url}
\usepackage[normalem]{ulem}
\usepackage{subcaption}

\usepackage{latexsym}
\usepackage{mismath}
\usepackage{dsfont}
\usepackage{hyperref}

\theoremstyle{plain}

\theoremstyle{definition}
\newtheorem{Def}{Definition}

\newtheorem{Lem}[Def]{Lemma}
\newtheorem{The}[Def]{Theorem}

\newtheorem{Rem}[Def]{Remark}

\newcommand{\bu}{\boldsymbol{u}}
\newcommand{\bv}{\boldsymbol{v}}
\newcommand{\demi}{\frac{1}{2}}
\newcommand{\m}{\boldsymbol{m}}
\newcommand{\yu}{\boldsymbol{u}}

\newcommand{\fx}{\nabla^+_x}
\newcommand{\fy}{\nabla^+_y}
\newcommand{\cx}{\nabla^c_x}
\newcommand{\cy}{\nabla^c_y}
\newcommand{\bx}{\nabla^-_x}
\newcommand{\by}{\nabla^-_y}

\newcommand{\Ld}{\Delta_h}

\newcommand{\dvs}{\mathbb{P}^0(\mathcal{T}_h;\R^2)} 
\newcommand{\drhos}{\mathbb{P}^0(\mathcal{T}_h)} 

\newcommand{\dd}{\,\mathrm{d}}
\newcommand{\Dhpx}{D^+_{h,x}}
\newcommand{\Dhpy}{D^+_{h,y}}

\newcommand{\Dhmx}{D^-_{h,x}}
\newcommand{\Dhmy}{D^-_{h,y}}

\newcommand{\Dhcx}{D^c_{h,x}}
\newcommand{\Dhcy}{D^c_{h,y}}
\newcommand{\normn}[1]{\lVert#1\rVert}
\newcommand{\norml}[1]{\big\lVert#1\big\rVert}

\newcommand{\BV}{\mathrm{BV}}

\newcommand{\dv}{\operatorname{div}}
\newcommand{\calE}{\mathcal E}
\newcommand{\calK}{\mathcal K}
\newcommand{\bbI}{\mathbb I}

\newcommand{\icj}{{i,j}}
\newcommand{\be}{\boldsymbol{e}}
\newcommand{\bm}{\boldsymbol{m}}
\newcommand{\bbS}{\mathbb S}
\newcommand{\bbD}{\mathbb D}
\renewcommand{\R}{\mathbb{R}}

\renewcommand{\N}{\mathbb{N}}
\newcommand{\T}{\mathbb{T}^d}
\newcommand{\dev}{\operatorname{dev}}
\newcommand{\bvarphi}{\boldsymbol{\varphi}}

\date{}

\title{Convergence analysis for a finite-volume scheme for the Euler- and Navier-Stokes-Korteweg  system via energy-variational solutions}
\author{Thomas Eiter\thanks{Department of Mathematics and Computer Science, Freie Universit\"at Berlin, Arnimallee 14, 14195 Berlin, Germany;  Weierstrass Institute for Applied Analysis and Stochastics, Anton-Wilhelm-Amo-Str.~39, 10117 Berlin, Germany;  email: \texttt{thomas.eiter@wias-berlin.de}}
\and
Jan Giesselmann\thanks{Numerical Analysis and Scientific Computing,
Department of Mathematics, Technical University of Darmstadt, Dolivostr.~15, 64293 Darmstadt, Germany, email: \texttt{Jan.Giesselmann@tu-darmstadt.de}}
\and
Robert Lasarzik\thanks{Weierstrass Institute for Applied Analysis and Stochastics, Anton-Wilhelm-Amo-Str.~39, 10117 Berlin, Germany;  email: \texttt{robert.lasarzik@wias-berlin.de}} 
\and
Philipp Öffner\thanks{Institute of Mathematics, Clausthal University of Technology, Erzstr.~1, 68378 Clausthal-Zellerfeld, Germany,
email: \texttt{philipp.oeffner@tu-clausthal.de}} 
\and 
Robert Sauerborn\thanks{Institute of Mathematics, Clausthal University of Technology, Germany, Erzstr.~1, 68378 Clausthal-Zellerfeld
email: \texttt{robert.sauerborn@tu-clausthal.de}}
}

\begin{document}
\maketitle

\begin{abstract}
We consider a structure-preserving finite-volume scheme for the Euler--Korteweg (EK) and Navier--Stokes--Korteweg (NSK) equations. 
We prove that its numerical solutions converge to energy-variational solutions of EK or NSK under mesh refinement.
Energy-variational solutions constitute a novel solution concept that has recently been introduced for hyperbolic conservation laws, including the EK system, and which we extend to the NSK model.
Our proof is based on establishing uniform  estimates following from the properties of the structure-preserving scheme, and using the stability  of the energy-variational formulation under weak convergence in the natural energy spaces. 
\end{abstract}

\medskip

\noindent
\textbf{MSC2020}:  
35D99, 
35Q35, 
65M08, 
65M12,  
76M12.  
\\
\textbf{Keywords}:
Euler--Korteweg equations, Navier--Stokes--Korteweg equations, structure-preserving scheme, finite-volume method, convergence analysis, energy-variational solutions.

\section{Introduction}
The motion of compressible fluids, which plays a central role in continuum mechanics and mathematical physics, is classically described by the Euler or Navier--Stokes equations. While these models are fundamental in fluid mechanics, they fail to capture certain physical phenomena, such as capillarity effects \cite{zbMATH07406001}. A formulation of capillarity with diffuse interfaces was first introduced by Korteweg~\cite{korteweg1901} and later derived rigorously in a thermodynamically consistent manner~\cite{DunnSerrin85}. For compressible, capillary, isothermal fluids, this leads to the study of the Euler--Korteweg (EK) system and the Navier--Stokes--Korteweg (NSK) system (\textit{cf.}~\cite{Bresch03}), which we consider with periodic boundary conditions:
\begin{subequations}\label{sys:EulKor}
	\begin{align}
	\partial_t \rho + \dv (\rho \yu) &= \,0 && \text{in }\mathbb{T}^d \times (0,T)\,, \label{eq:}\\
	\partial_t(\rho \yu) + \dv \left( \rho \yu \otimes \yu  + p(\rho)\mathbb{I} \right) &=  
    \dv\bbS(\bu)
    +\kappa \rho \nabla\Delta \rho && \text{in }\mathbb{T}^d \times (0,T)\,,
    \\
    (\rho,\rho\bu)(0)&=(\rho_0,\rho_0\bu_0)
    &&\text{in }\mathbb T^d\,.
	\end{align}
\end{subequations}
Here, the unknowns are the velocity field $\yu : \mathbb{T}^d \times (0,T) \to \R^d$ and the density $\rho : \mathbb{T}^d \times (0,T) \to \R$, where $\mathbb{T}^d$ denotes the flat, $d$-dimensional torus.
We restrict ourselves to $d=2$.
The viscous stress tensor $\bbS$ is given by Newton's rheological law
\[
\bbS(\nabla\bu) = 2\mu \dev(\bbD(\bu))
 + \eta \dv \bu \bbI ,
\qquad
\bbD(\bu)=\frac{1}{2}\big(\nabla\bu + \nabla \bu^T\big)\,,
\]
where $\dev \mathbb A=\mathbb A-\frac{1}{d}(\tr\mathbb A) \bbI $ denotes 
 the deviatoric part of a symmetric matrix $\mathbb A\in\R^{d\times d}$.
The constants $\mu\geq 0$, $\eta\geq0$ and $\kappa>0$ denote the coefficients for shear viscosity, bulk viscosity and capillarity, respectively, and
$p\colon[0,\infty)\to\R$ is a prescribed pressure law given by
$p(\rho)=k\rho^\gamma$ for $k>0$ and $\gamma>1$.
Finally, $(\rho_0,\bu_0)$ denote given initial data.
We consider the cases $\mu=\eta=0$ and $\mu,\eta>0$, corresponding to the Euler--Korteweg and Navier--Stokes--Korteweg equations. 

These systems pose significant mathematical challenges. This difficulty is already reflected at the level of general hyperbolic conservation laws, where classical solutions exists only locally in time and shock formation necessitates the introduction of weak solutions. 
Several recently developed non-uniqueness results fundamentally challenge the predictive power of these equations at the level of weak (entropy) solutions. In the incompressible case without capillarity, non-uniqueness of weak solutions was obtained for the incompressible Euler equations~\cite{DeLellisSzekely09} as well as for Leray solutions to the incompressible Navier--Stokes equations~\cite{Dallas22}.  In the absence of capillarity, the theory of compressible Euler equations is marked by non-uniqueness of weak (entropy) solutions and ill-posedness in low-regularity settings~\cite{Chioda15}. 
The existence of infinitely many global-in-time weak solutions to the Euler--Korteweg--Poisson system for sufficiently smooth initial data was shown in~\cite{Dona15}.

To ensure the global existence of solutions for a large set of initial data, 
several generalized solution concepts have been proposed in the literature. Even before the aforementioned rigorous non-uniqueness results, measure-valued solutions were introduced 
for hyperbolic conservation laws~\cite{Diperna}.
The existence of measure-valued solutions was also established for the compressible Euler and Navier--Stokes equations~\cite{Meas-CompNavSto}. In the case of the Euler--Korteweg--Poisson system, dissipative measure-valued solutions were recently shown to exist in~\cite{ExMeasEulKorte}, and the NSK case has been considered in~\cite{yang2026dissipative}.
Lions~\cite{Lions} stated an essential requirement for  generalized solution concepts: 
Solutions should satisfy a weak--strong uniqueness principle, \textit{i.e.,} coincide with a classical solution emanating from the same initial datum as long as the classical solution exists. This implies that such generalized solutions constitute natural extensions of classical solutions. Weak--strong uniqueness results typically rely on the construction of a suitable relative energy inequality, which was derived for the Euler--Korteweg and Navier--Stokes--Korteweg systems in~\cite{Giesselmann17}. This tool can also be employed to derive a posteriori error estimates~\cite{Giesselmann20}.

A refined concept of measure-valued solutions, so-called dissipative weak solutions, was introduced via the barycenter of the oscillation measure~\cite{Num} associated with a measure-valued solution, which satisfies the weak formulation up to a defect measure (\textit{cf.}~\cite{Num}). 
The most recent contribution to the family of generalized solutions is the concept of energy-variational solutions. So far, they have been considered for hyperbolic conservation laws~\cite{EVExistence} and viscoelastic fluids. Most recently, an global-in-time existence result was obtained in a very general context~\cite{EiterLasarzikSliwinski_envar}, which also yields the existence of energy-variational solutions to the Euler--Korteweg system. Energy-variational solutions identify a common convexity structure in many systems of partial differential equations that allows one to pass to the limit in suitable approximations. In certain cases, the high-dimensional defect measure of dissipative solutions can be equivalently represented by a one-dimensional auxiliary variable.
In particular, the equivalence of energy-variational solutions and dissipative weak solutions was established in~\cite{EVExistence} for both the incompressible and compressible Euler equations. In the recent work~\cite{Lasarzik25}, it was observed that this appears to be a recurrent structure in nonlinear evolution equations: energy-variational solutions were shown to be stronger than varifold solutions in the case of the two-phase Navier--Stokes equations, and stronger than Young measure-valued solutions for the hyperbolic system of polyconvex elasticity.

In the context of the EK and NSK systems~\eqref{sys:EulKor}, the weak formulation heavily relies on a conservative formulation of the third order terms introducing the Korteweg stress tensor through the identity
\[
\rho \nabla \Delta \rho
= \nabla \left ( \dv ( \rho \nabla  \rho) - \frac{1}{2}| \nabla \rho|^2 \right )
- \dv (\nabla \rho \otimes \nabla\rho ) \,,
\]
which leads to the weak formulation 
\begin{align}\label{intro:weakform}
\begin{split}
        \int_{\T} \rho \psi \dd x \Big|_s^t
        - \int_s^t \int_{\T} \rho \partial _t \psi + \bm  \cdot \nabla \psi \dd x \dd \tau &=0 \,,\\
    \int_{\T}\bm \cdot \bvarphi \dd x \Big|_s^t
    - \int_s^t \int_{\T} \bm \cdot \partial_t \bvarphi
    + \frac{\bm\otimes \bm }{\rho } : \nabla \bvarphi
    + p(\rho) \dv \bvarphi
    - \mathbb{S}(\nabla \bu)
    : \nabla \bvarphi \dd x \dd \tau & \\
    -\kappa \int_s^t\int_{\T} \rho \nabla \rho \cdot \nabla \dv \bvarphi
    + \frac{1}{2}|\nabla \rho|^2 \dv \bvarphi
    + \nabla\rho \otimes \nabla \rho : \nabla \bvarphi \dd x \dd \tau &= 0 \,,
\end{split}
\end{align}
for all $ s,t\in[0,T]$ and all test functions
$\psi  \in  C^1([0,T];C^1(\T;\R))$ and
$\bvarphi \in C^1([0,T]; C^{2}(\T;\R^d))$.

Here, $\bm=\rho\bu$.
Formally, this system admits the energy-dissipation inequality 
\begin{align}\label{intro:en}
  \mathcal{E}(\rho,\bm) \Big|_s^t
  +\int_s^t \int_{\mathbb{T}^d}
\left(
2\mu |\dev(\mathbb{D}(\bu))|^2
+\eta (\operatorname{div} \bu)^2
\right)
\dd x \dd\tau \leq 0
\end{align}
for the total energy 
\begin{equation}
\label{eq:energy}
  \mathcal{E}(\rho,\bm)
  :=\int_{\T} \tilde{\eta}(\rho(x),m(x))+\frac{\kappa |\nabla \rho(x)|^2}{2}\dd x,
\end{equation}
where $\tilde{\eta}: \R \times \R^d \rightarrow [0,\infty]$ is given by
\[
    \tilde{\eta}(\rho, \bm) = \begin{cases}
    \frac{|\bm|^2}{2\rho}+P(\rho) \quad &\text{if} \quad \rho>0, \\
    0 \quad &\text{if}\quad (\rho,m)=(0,0),\\
    \infty \quad &\text{else},
       \end{cases}
\]
with $P$ defined by the identity $\rho P'(\rho)=p(\rho)+P(\rho)$.

To define energy-variational solutions, 
we now introduce an auxiliary variable $E$ as an upper bound for the energy, $E\geq \mathcal{E}(\rho,\bm)$. Subtracting~\eqref{intro:weakform} from~\eqref{intro:en} and adding the energy defect $\mathcal{E}(\rho,\bm)-E$, multiplied by a suitable regularity weight $\mathcal{K}: C^2(\T;\R^2)\to [0,\infty)$, yields 
\begin{equation}\label{intro:envar.ek}
\begin{aligned}
    \left[E- \int_{\mathbb T^2} \rho  \psi + \bm \cdot\bvarphi \dd{x} \right]\bigg|_s^t
    &+\int_s^t \int_{\mathbb T^2} \rho \, \partial_t \psi+ \bm \cdot\nabla \psi
    + \bm \cdot \partial_t \bvarphi
    + \left(\frac{\bm {\otimes} \bm}{\rho}+ p(\rho) \mathbb{I} \right){:}\nabla \bvarphi \dd{x} \dd{\tau} \\&+\int_s^t\int_{\mathbb T^2}  \bbS(\bu):\bbD(\bu) - \bbS(\bu):\bbD(\bvarphi)\dd{x}\dd{\tau}\\
    &+\int_s^t\int_{\mathbb T^2} \kappa \rho\nabla \rho \cdot \nabla (\dv \bvarphi)
    + \frac{\kappa}{2} |\nabla \rho|^2\dv \bvarphi
    + \kappa\nabla \rho \otimes \nabla \rho:\nabla \bvarphi \dd{x}\dd{\tau}\\
    &+ \int_s^t \mathcal{K}(\bvarphi)(\mathcal{E}(\rho,\bm)-E) \dd{\tau}\leq 0
\end{aligned}
\end{equation}
for all test functions $\psi\in C^1([0,T]; C^1({\mathbb T^2}))$ and
$\bvarphi \in C^1([0,T];C^2({\mathbb T^2};\R^d))$, and for almost every $s<t$.
With an appropriate choice of the regularity weight $\mathcal K$, the left-hand side is lower semicontinuous with respect to the variables $(\rho, \bm, E)$, rendering the formulation stable under convergence in the natural energy spaces. Consequently, this formulation is stable under weak convergence without the need to adopt measure-valued spaces and is therefore amenable to numerical analysis.

Thus, generalized solution frameworks are not only of analytical interests but also play a crucial role in the convergence analysis of (high-order) structure-preserving numerical schemes. 
An important concept in this context is that of dissipative weak solutions, together with corresponding convergence results. In \cite{foundations}, classical finite-volume schemes with Lax--Friedrichs and Rusanov fluxes are shown to converge for the Euler equations, and in \cite{yuan2023} convergence of the Godunov method has been shown for the complete Euler equations. The book \cite{Num} provides a survey of convergence results for various finite-volume-type schemes applied to both the barotropic and the complete Euler as well as the Navier--Stokes equations. These works form the basis for more recent convergence analyses of higher-order schemes. In particular, convergence results have been established for residual-distribution schemes in \cite{abgrall2022convergence}, for flux-corrected finite-element methods in \cite{zbMATH08167643}, and for discontinuous Galerkin methods in \cite{oeffner2023}. More recently, \cite{arXiv:2601.17452} proved convergence of a hyperbolic thermodynamically compatible finite-volume scheme.

In a different context, namely, the Ericksen--Leslie equations modeling liquid crystals, energy-variational solutions were shown to be beneficial for the numerical analysis of structure-inheriting schemes. In particular, in~\cite{LasarzikReiter23}, convergence of a fully discrete finite-element scheme was proved in this solvability framework. 
Thus, the energy-variational concept has proved to be well suited for identifying limits of numerical schemes. In the present work, we employ this framework to show convergence of solutions to a recently introduced finite-difference scheme for the EK and NSK  systems~\cite{giesselmann2026structurepreservingfinitevolume}, which 
is structure preserving in the sense that total mass and momentum are conserved, and the natural energy-dissipation inequality is satisfied. 
We establish convergence of the numerical solutions toward an energy-variational solution. Working within the energy-variational framework allows us to remain in standard energy spaces and to avoid the introduction of Young measures, which entail additional complexity. Instead of employing multiple defect measures to describe limit terms that cannot be identified directly, we rely solely on the real-valued energy defect and the weak sequential closedness of the energy-variational formulation. 

To conclude this introduction, let us further give a brief overview of numerical schemes for the EK and NSK systems. Existing approaches for the NSK system include energy-consistent discontinuous Galerkin (DG) methods \cite{zbMATH08071323, zbMATH06342342}, which violate strict conservation of momentum and local DG schemes \cite{zbMATH06660648}.  For the EK system, entropy-stable finite-difference schemes were  developed in  \cite{zbMATH06579844}. Let us also mention schemes based on parabolic  \cite{keim2023} or hyperbolic \cite{zbMATH07599597} relaxation of the NSK system.
Whether the convergence analysis developed here can be extended to those schemes is an interesting question but beyond the scope of this paper. 
Finally, let us mention that, to the best of our knowledge, the paper at hand is the first to assert convergence of a numerical scheme for the EK and NSK equations without assuming existence of a strong solution. 

The paper is organized as follows: 
In Section~\ref{sec:scheme},
we recall the finite-volume scheme introduced in~\cite{giesselmann2026structurepreservingfinitevolume},
state its fundamental properties,
and construct the corresponding piecewise constant approximate solutions.
In Section~\ref{sec:convergence}, 
we study their convergence.
From \textit{a priori} bounds,
we derive convergence of a subsequence
and identify the limit in terms of an energy-variational solution to the Euler--Korteweg or the Navier--Stokes Korteweg system.

\section{The finite-volume method}
\label{sec:scheme}

In this section we state the semidiscrete finite-volume method for the two-dimensional Euler--Korteweg and Navier--Stokes--Korteweg system~\eqref{sys:EulKor}
that was introduced in \cite{giesselmann2026structurepreservingfinitevolume},
and we collect some relevant properties.

\subsection{The semidiscrete scheme}
We realize the torus $\mathbb T^2$ in terms of 
the rectangular computational domain $\Omega = (0,L_x)\times(0,L_y)$ with $L_x,L_y>0$.
We divide $\Omega$ into 
Cartesian grid with cell centers
$
x_i = i  h_x, \
y_j = j  h_y,
$
and define the cell interfaces
$
x_{i\pm \frac{1}{2}} = x_i \pm \frac{1}{2} h_x, \
y_{j\pm \frac{1}{2}} = y_j \pm \frac{1}{2} h_y
$
where $h_x=L_x/M$ and $h_y=L_y/N$ are the corresponding mesh sizes
for $M, N \in \mathbb{N}$. 
The control volumes (cells) are given by
$
\Omega_{i,j}
= (x_{i-\frac{1}{2}}, x_{i+\frac{1}{2}}]
\times
(y_{j-\frac{1}{2}}, y_{j+\frac{1}{2}}].
$

For piecewise constant $\phi^h$ on this grid, 
that is, $\phi|_{\Omega_\icj}=\phi_\icj$,
we define the discrete difference operators
\begin{align*}
	\nabla^\pm\phi_{i,j} &\coloneq \begin{pmatrix}
		\nabla^\pm_x\phi_{i,j}  \\
		\nabla^\pm_y\phi_{i,j}\end{pmatrix} \coloneqq \pm \begin{pmatrix}
		\frac{\phi_{i\pm 1,j} - \phi_{i,j}}{h_x}  \\
		\frac{\phi_{i,j \pm1} - \phi_{i,j}}{h_y}
	\end{pmatrix},
    \qquad
    \nabla^c\phi_{i,j} \coloneqq		
	\begin{pmatrix}
		\cx\phi_{i,j}  \\
		\cy\phi_{i,j}
	\end{pmatrix} \coloneqq
	\begin{pmatrix}
		\frac{\phi_{i+1,j} - \phi_{i-1,j}}{2h_x}  \\
		\frac{\phi_{i,j+1} - \phi_{i,j-1}}{2h_y}
	\end{pmatrix},
    \\
    \Ld\phi_{i,j} &\coloneqq \frac{\phi_{i+1,j}-2\phi_{i,j}+\phi_{i-1,j}}{h_x^2}+\frac{\phi_{i,j+1}-2\phi_{i,j}+\phi_{i,j-1}}{h_y^2}.
\end{align*}
Here, we set $\phi_{i+kM, j+lN} \coloneqq \phi_{i,j}$ for $k,l \in \mathbb{Z}$,
which realizes the periodic boundary conditions. 

We approximate solutions 
$(\rho, \bu) = (\rho, u, v)$ to~\eqref{sys:EulKor}
by cell-wise constant functions
$(\rho^h,\bu^h)=(\rho^h,u^h,v^h)\colon\Omega\times(0,T)\to\R^3$
defined by 
\[
\rho^h(x,t)\coloneqq\sum_{i=1}^{M}\sum_{j=1}^{N} \rho_\icj(t) 1_{\Omega_\icj}(x), \qquad
\bu^h(x,t)\coloneqq\sum_{i=1}^{M}\sum_{j=1}^{N} \bu_\icj(t) 1_{\Omega_\icj}(x).
\]
Here, we set $h \coloneqq \max\lbrace h_x, h_y\rbrace$.
In what follows, we always assume $h_x=\theta_x h$ and $h_y=\theta_y h$ for some fixed ratios $\theta_x,\theta_y>0$.
The grid-wise values
$(\rho_{\icj},\bu_{\icj})=(\rho_{\icj},u_{\icj},v_{\icj})$, 
$i=1,\dots,M$, $j=1,\dots,N$,
are determined
as solutions to the system of ordinary differential equations
\begin{subequations}\label{method:semi2d}
	\begin{align}
		&\frac{\di}{\di t}  \rho_{i,j}
        +\cx(\rho u)_{i,j}+ \cy(\rho v)_{i,j} - h\lambda \Ld\rho_{i,j}=0; \label{semi2drho}
        \\	
		&\frac{\di}{\di t}  (\rho u)_{i,j}  + \cx(\rho u^2)_{i,j}+ \cy(\rho u v)_{i,j} + \cx p(\rho_{i,j})  - h\lambda \Ld(\rho u)_{i,j}\notag\\
        &\quad -\mu (\bx\fx u_{i,j} + \by \fy u_{i,j} + \by \fx v_{i,j} - \bx \fy v_{i,j})\notag \\
        &\quad - \eta (\bx \fx u_{i,j} + \bx \fy v_{i,j}) \notag \\ 
		&\quad- \kappa \Bigg[ \bx \frac{\rho_{i,j}\Ld\rho_{i+1,j} + \rho_{i+1,j}\Ld\rho_{i,j}}{2}-\demi \bx \left( \fx\rho_{i,j}\right)^2\ \label{semi2du}\\
		&\qquad\qquad + \demi \bx\left(  \by\rho_{i+1,j}\by \rho_{i,j}\right)   - \by \left( \cx\rho_{i,j}\, \fy \rho_{i,j}\right) \Bigg]=0; \notag 
        \\
		&\frac{\di}{\di t}  (\rho v)_{i,j}   
        +\cy(\rho v^2)_{i,j}+ \cx(\rho u v)_{i,j} + \cy p(\rho_{i,j})  - h\lambda \Ld(\rho v)_{i,j} \notag\\
        & \quad - \mu (\bx \fy u_{i,j} + \by \fy v_{i,j} + \bx \fx v_{i,j} - \by \fx u_{i,j})
        \notag\\
        & \quad - \eta ( \by \fx u_{i,j} + \by \fy v_{i,j})
        \notag \\
		&\quad- \kappa \Bigg[ \by \frac{\rho_{i,j}\Ld\rho_{i,j+1} + \rho_{i,j+1}\Ld\rho_{i,j}}{2}-\demi \by \left( \fy\rho_{i,j}\right)^2 \label{semi2dv} \\
		&\qquad\qquad + \demi \by\left(  \bx\rho_{i,j+1}\,\bx \rho_{i,j}\right)   - \bx \left( \cy\rho_{i,j}\, \fx \rho_{i,j}\right) \Bigg]=0;  \notag
	\end{align}
together with the initial conditions
\begin{equation}
\label{eq:initialdata.cellwise}
\rho_{i,j}(0)=\rho_{i,j}^0 := \frac{1}{|\Omega_{i,j}|}\int_{\Omega_{i,j}}\rho_0 \di x,
\qquad
(\rho\bu)_{i,j}(0)=\bm_{i,j}^0 := \frac{1}{|\Omega_{i,j}|}\int_{\Omega_{i,j}}\rho_0\bu_0 \di x.
\end{equation}
\end{subequations}
Here, $\lambda\geq 0$ is a numerical dissipation parameter,
a suitable choice of which is specified later, see Theorem~\ref{2Ddissipation} below.
The initial-value problem~\eqref{method:semi2d} corresponds to the semidiscrete finite-volume method for the two-dimensional Euler-- and Navier--Stokes--Korteweg system \eqref{sys:EulKor} introduced in~\cite{giesselmann2026structurepreservingfinitevolume}.

To express the scheme~\eqref{method:semi2d} in terms of $(\rho^h,\bu^h)$, 
for functions $f\colon\Omega\to\R$ we define
\[
\begin{aligned}
    D^\pm_{h,x} f(x,y)&\coloneqq \pm \frac{f(x\pm h_x,y)-f(x,y)}{h_x},
    &
    \Dhcx f(x,y)&\coloneqq \frac{f(x+h_x,y) - f(x-h_x,y)}{2h_x},
    \\
    D^\pm_{h,y} f(x,y)&\coloneqq \pm \frac{f(x,y\pm h_y)-f(x,y)}{h_y},
    &
    \Dhcy f(x,y)&\coloneqq \frac{f(x,y+h_y) - f(x,y-h_y)}{2h_y},
\end{aligned}
\]
and
\[
    \nabla^\pm_{h} f\coloneqq \big( D^\pm_{h,x} f,  D^\pm_{h,y} f\big),
    \qquad
    \nabla^c_{h} f\coloneqq \big( D^c_{h,x} f,  D^c_{h,y} f\big),
    \qquad
    D^2_h f := \Dhmx\Dhpx f + \Dhmy\Dhpy f.
\]
Then~\eqref{method:semi2d} is satisfied
if and only if the space-discrete functions $(\rho^h,\bu^h)$ are subject to
\begin{subequations}
\label{eq:EK.approx}
\begin{align}
	&\partial_t\rho^h + \Dhcx(\rho^h u^h) +\Dhcy(\rho^h v^h) -\lambda h D_h^2\rho^h =0, 
    \label{eq:EK.approx.cont}
    \\	
    \begin{split}
	&\partial_t(\rho^h u^h) +  \Dhcx(\rho^h (u^h)^2) + \Dhcy(\rho^h u^hv^h)+ \Dhcx p(\rho^h) - \lambda h D_h^2 (\rho^h u^h) 
    \\
    &\quad -\mu (\Dhmx\Dhpx u^h + \Dhmy\Dhpy u^h + \Dhmy\Dhpx v^h - \Dhmx\Dhpy v^h) 
    \\
    &\quad- \eta (\Dhmx \Dhpx u^h + \Dhmx\Dhpy v^h)  
    \\
	&\quad- \kappa \Big[ \Dhmx\frac{\rho^h D_h^2\rho^h(\cdot+h_x\be_x)+\rho^h(\cdot+h_x\be_x) D_h^2\rho^h}{2} - \demi \Dhmx(\Dhpx\rho^h)^2
    \\
    &\qquad\qquad
    +\demi \Dhmx\big(  \Dhmy\rho^h\Dhmy \rho^h(\cdot+h_x\be_x)\big)   - \Dhmy \big( \Dhcx\rho^h\, \Dhpy \rho^h\big)\Big] =0,
    \end{split}
    \label{eq:EK.approx.mom.u}
    \\
    \begin{split}
	&\partial_t(\rho^h v^h) +  \Dhcy(\rho^h (v^h)^2) + \Dhcx(\rho^h u^hv^h)+ \Dhcy p(\rho^h) - \lambda h D_h^2 (\rho^h v^h) - \mu  D_h^2 v^h  
    \\
    & \quad - \mu (\Dhmx\Dhpy u^h + \Dhmy\Dhpy v^h + \Dhmx\Dhpx v^h- \Dhmy\Dhpx u^h)
    \\
    & \quad - \eta ( \Dhmy\Dhpx u^h + \Dhmy\Dhpy v^h)
    \\
	&\quad- \kappa \Big[ \Dhmy\frac{\rho^h D_h^2\rho^h(\cdot+h_y\be_y)+\rho^h(\cdot+h_y\be_y) D_h^2\rho^h}{2} - \demi \Dhmy(\Dhpy\rho^h)^2
    \\
    &\qquad\qquad
    +\demi \Dhmy\big(  \Dhmx\rho^h\Dhmx \rho^h(\cdot+h_y\be_y)\big)   - \Dhmx \big( \Dhcy\rho^h\, \Dhpx \rho^h\big)\Big] =0,
    \end{split}
    \label{eq:EK.approx.mom.v}
\end{align}
in $(0,T)\times\Omega$
and satisfy the initial conditions 
\begin{equation}\label{eq:definitial}
(\rho^h,\rho^h\yu^h)(0)=(\rho^h_0,\bm^h_0),
\end{equation}
\end{subequations}
where the spatially discretized initial conditions 
$(\rho^h_0,\bm^h_0)$
are given by
\[
\rho_0^h(x)\coloneqq\sum_{i=1}^M\sum_{j=1}^N 1_{\Omega_\icj}(x)\rho^0_\icj, \qquad
\bm_0^h(x)\coloneqq\sum_{i=1}^M\sum_{j=1}^N 1_{\Omega_\icj}(x)\bm_\icj^0,
\]
with $\rho_\icj^0$ and $\mathbf{m}_\icj^0$ given by~\eqref{eq:initialdata.cellwise}.

Note that \eqref{eq:EK.approx.cont}--\eqref{eq:EK.approx.mom.v} constitutes a finite-dimensional system of ordinary differential equations whose right-hand side is Lipschitz continuous as long as the densities in all cells remain positive. In particular, for initial data with positive density, a solution to \eqref{eq:EK.approx} exists locally in time. However, establishing global existence is beyond the scope of the work at hand. Theoretically, it might  even happen that the existence interval shrinks to zero as  $h$ decreases. A key assumption of our convergence analysis is that there is a time interval of existence that is uniformly in $h$.
This is the same type of assumption as in
other convergence studies, see~\cite{Feireisl2004,zbMATH08167643, Num} for instance.

\subsection{Properties}
We next collect some fundamental properties of the scheme. 
We consider initial data $(\rho_0, \m_0)$ with $\rho_0 \geq \underline{\rho} >0$ and 
finite total energy, that is, 
$\mathcal{E}(\rho_0,\bm_0) < \infty$
with $\mathcal E$ defined in~\eqref{eq:energy}.
 A discrete equivalent to the total energy $\mathcal{E}$ is given by 
	\begin{align}\label{eq:energy2d}
	E^h= \frac{|\Omega|}{MN} \sum_{i=1}^M\sum_{j=1}^N \left(  \tilde{\eta}(\rho_{i,j}, \rho_{i,j} \yu_{i,j}) + \frac{\kappa}{2} \left| \nabla^+\rho_{i,j}\right| ^2\right) \, ,
\end{align} 
which can also be expressed by an integral functional as
\begin{equation}\label{eq:energy.discrete}
E^h=\int_{\mathbb T^2} e^h(x)\dd x, 
\qquad
e^h:=\tilde\eta(\rho^h,\rho^h\bu^h)
+\frac{\kappa}{2}|\nabla_h^+\rho^h|^2.
\end{equation}

In \cite{giesselmann2026structurepreservingfinitevolume}, the conservation of mass and momentum as well as the dissipation of the discrete energy \eqref{eq:energy2d} were proven for an analogous scheme that discretizes an equation that coincides with \eqref{sys:EulKor} up to the fact that the Newtonian stress tensor is replaced with the simpler expression $\mu \nabla \bu$.

\begin{The}\label{2Dconservation}
	The semidiscrete method \eqref{method:semi2d} conserves total mass and total momentum,
    which are given by
    \[
    \int_{\mathbb T^2}\rho^h\dd x
    =\frac{|\Omega|}{MN} \sum_{i=1}^{M}\sum_{j=1}^{N} \rho_{i,j},
    \qquad
    \int_{\mathbb T^2}\rho^h\bu^h\dd x
    =\frac{|\Omega|}{MN}\sum_{i=1}^{M}\sum_{j=1}^{N} \rho_{i,j}\yu_{i,j}
    \]
    More precisely, every solution $(\rho^h,\bu^h)$ 
    to~\eqref{eq:EK.approx} satisfies
    \[
    \int_{\mathbb T^2}\rho^h(t)\dd x
    =\int_{\mathbb T^2}\rho^h_0\dd x,
    \qquad
    \int_{\mathbb T^2}\rho^h(t)\bu^h(t)\dd x
    =\int_{\mathbb T^2}\bm_0^h\dd x
    \]
    for $t\in [0,T^*)$, where $T^ *$ is the maximal existence time of the solution to \eqref{method:semi2d}.
\end{The}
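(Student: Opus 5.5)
The plan is to sum each equation of \eqref{method:semi2d} over all cells $i=1,\dots,M$, $j=1,\dots,N$, multiply by $|\Omega|/(MN)$, and show that every spatial term sums to zero. The only tool needed is \emph{discrete summation by translation invariance}. By the periodic convention $\phi_{i+kM,j+lN}=\phi_{i,j}$, for any periodic grid function $g$ and any fixed shift $(k,l)$ we have
\[
\sum_{i=1}^M\sum_{j=1}^N g_{i+k,j+l}=\sum_{i=1}^M\sum_{j=1}^N g_{i,j}.
\]

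\textbf{Step 1 (difference operators).} Every spatial term in \eqref{semi2drho}, \eqref{semi2du}, \eqref{semi2dv} is a linear difference operator applied to some periodic grid function $g$. This grid function is a nonlinear expression in $\rho$, $u$, $v$, but it is still periodic.
\begin{itemize}
\item The operators $\nabla^\pm_x$, $\nabla^\pm_y$, $\cx$, $\cy$ are linear combinations of differences $g_{i+k,j+l}-g_{i,j}$ (divided by $h_x$ or $h_y$). Hence $\sum_{i,j}\nabla^\pm_x g_{i,j}=0$, and likewise for the other first-order operators.
\item Compositions of such operators, such as $\bx\fx u$, $\by\fx v$, $\bx\fy v$ and $\Ld$, are an outer difference operator applied to a periodic grid function, namely the inner difference. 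They therefore also sum to zero.
\end{itemize}

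\textbf{Step 2 (the individual terms).}
\begin{itemize}
\item \emph{Mass equation.} The terms $\cx(\rho u)$, $\cy(\rho v)$ and $h\lambda\Ld\rho$ vanish upon summation. This gives $\frac{\di}{\di t}\sum_{i,j}\rho_{i,j}=0$.
\item \emph{Momentum equations, lower-order terms.} The convective terms $\cx(\rho u^2)$, $\cy(\rho uv)$, the pressure term $\cx p(\rho)$, the artificial viscosity $h\lambda\Ld(\rho u)$, and all $\mu$- and $\eta$-terms vanish by Step 1.
\item \emph{Momentum equations, capillarity.} Each bracketed Korteweg contribution in \eqref{semi2du} has the form $\bx(\cdot)$ or $\by(\cdot)$ applied to a periodic grid function. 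For instance,
\[
\tfrac12\big(\rho_{i,j}\Ld\rho_{i+1,j}+\rho_{i+1,j}\Ld\rho_{i,j}\big),\qquad (\fx\rho_{i,j})^2,\qquad \cx\rho_{i,j}\,\fy\rho_{i,j}
\]
are all periodic, since they are built from shifts of the periodic $\rho$. So the whole bracket sums to zero, and the same holds for \eqref{semi2dv}.
\end{itemize}
Consequently $\frac{\di}{\di t}\sum_{i,j}(\rho u)_{i,j}=0$ and $\frac{\di}{\di t}\sum_{i,j}(\rho v)_{i,j}=0$.

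\textbf{Step 3 (integration in time).} On $[0,T^*)$ the ODE solution is $C^1$, so the sums are constant in time. By \eqref{eq:initialdata.cellwise} they equal their initial values, which are $\int\rho_0^h$ and $\int\bm_0^h$. Converting sums into integrals of the piecewise constant functions via $|\Omega_{i,j}|=|\Omega|/(MN)$ gives both identities in the statement.

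\textbf{Expected obstacle.} There is no analytic difficulty. The one point needing care is to confirm that \emph{every} Korteweg term, including the mixed ones such as $\by(\cx\rho\,\fy\rho)$, is written in exact outer-difference (conservative) form. This is what the structure of the scheme from \cite{giesselmann2026structurepreservingfinitevolume} guarantees, and I would verify it term by term. Positivity of $\rho_{i,j}$ on $[0,T^*)$ ensures that $u=(\rho u)/\rho$ is well-defined, so all quantities involved are finite.
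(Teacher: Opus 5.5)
Your proof is correct and follows essentially the route the paper takes: the paper only says the proof is analogous to \cite[Theorem 2]{giesselmann2026structurepreservingfinitevolume}, which is the standard argument of summing over all cells and using periodicity. Under that summation each flux, pressure, viscous, artificial-viscosity and Korteweg term is an outer difference of a periodic grid function and telescopes to zero. The one point you leave implicit is that $\lambda$ from \eqref{eq:lambda} is a single cell-independent number at each time. This is what makes $h\lambda\Ld\rho$ and $h\lambda\Ld(\rho u)$ exact differences; a cell-dependent $\lambda_{i,j}$ would break this.
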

\begin{proof}
    The proof is completely analogous to the proof of \cite[Theorem 2]{giesselmann2026structurepreservingfinitevolume}.
\end{proof}
With a particular choice of the numerical dissipation parameter $\lambda$, 
we can ensure that the semidiscrete method dissipates the total discrete energy \eqref{eq:energy2d}.

\begin{The}\label{2Ddissipation}
    Let 
    \begin{equation}
    \label{eq:lambda}
        \lambda = \demi \max_{i,j} \left\lbrace |\yu_{i,j}| + \sqrt{p'(\rho_{i,j})}\right\rbrace.
    \end{equation}
    Then every solution $(\rho_\icj,\bu_\icj)_\icj$ 
    to the semidiscrete method \eqref{method:semi2d} satisfies the inequality 
	\begin{equation}\label{eq:edi.EK.disc}
        \begin{aligned}
		\frac{\di}{\di t}E^h[\rho,\rho \yu] 
        &+ 2\mu \frac{|\Omega|}{MN}\sum_{i=1}^{M}\sum_{j=1}^{N}  \left|\dev(\bbD^+(\bu_\icj))\right|^2
        \\
        &+ \eta \frac{|\Omega|}{MN}\sum_{i=1}^{M}\sum_{j=1}^{N}  |\dv^+\bu_\icj|^2
        + \kappa \frac{|\Omega|}{MN}\sum_{i=1}^{M}\sum_{j=1}^{N} \lambda h (\Ld\rho_{i,j})^2\leq 0
        \end{aligned}
	\end{equation} 
    in its interval of existence $(0,T^*)$,
    where
    \[
    \bbD^+(\bu_\icj):=\frac{1}{2}\big(\nabla^+ \bu_{i,j} + (\nabla^+ \bu_{i,j} )^T\big),
    \qquad
    \dv^+\bu_\icj:=\fx u_{i,j} + \fy v_{i,j}.
    \]
\end{The}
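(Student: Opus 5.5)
The plan is to compute $\frac{\di}{\di t}E^h$ directly from \eqref{method:semi2d} and show that each group of terms either vanishes by discrete summation by parts or has a sign. Since $\partial_\rho\tilde\eta(\rho,\bm)=P'(\rho)-\frac{|\bu|^2}{2}$ and $\partial_{\bm}\tilde\eta=\bu$, we have
\[
\frac{\di}{\di t}E^h=\frac{|\Omega|}{MN}\sum_{i,j}\Big[\Big(P'(\rho_\icj)-\tfrac{|\bu_\icj|^2}{2}\Big)\dot\rho_\icj+\bu_\icj\cdot\dot{\bm}_\icj+\kappa\,\nabla^+\rho_\icj\cdot\nabla^+\dot\rho_\icj\Big].
\]
By periodic summation by parts, $\sum\nabla^+\rho\cdot\nabla^+\dot\rho=-\sum \Ld\rho\,\dot\rho$. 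So the capillary part of the energy derivative is $-\kappa\sum\Ld\rho_\icj\dot\rho_\icj$. We then insert \eqref{semi2drho}--\eqref{semi2dv} and group the contributions into four blocks: the inviscid Euler terms, the capillarity terms, the viscous terms, and the numerical diffusion $h\lambda\Ld$.

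For the Euler block with $\lambda=0$, we rely on the known entropy-conservative structure of central differences. Inserting the continuity equation and momentum equations, the product rule shows that the kinetic-energy terms telescope to a sum of central differences. The pressure term $\bu\cdot\nabla^c p$ combines with $P'(\rho)\nabla^c\!\cdot(\rho\bu)$, up to a residual of order $h$. For the capillarity block, the $-\kappa\Ld\rho\,(\nabla^c\cdot(\rho\bu))$ contribution from the continuity equation has to cancel exactly against $\bu$ tested with the bracketed Korteweg terms in \eqref{semi2du}--\eqref{semi2dv}. The discretization in \cite{giesselmann2026structurepreservingfinitevolume} was designed precisely so that this cancellation holds, via the discrete analogue of $\rho\nabla\Delta\rho=\dv(\text{Korteweg stress})$. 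Here I would reproduce the computation of \cite[Theorem 3]{giesselmann2026structurepreservingfinitevolume} verbatim, since these terms are unchanged.

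The genuinely new part is the viscous block with the Newtonian stress. I would sum by parts, transferring $\bx,\by$ onto $u_\icj,v_\icj$ as $-\fx,-\fy$. The $\mu$-terms then become
\[
\mu\sum\Big[(\fx u)^2+(\fy u)^2+(\fx v)^2+(\fy v)^2+2\,\fx v\,\fy u-2\,\fx u\,\fy v\Big],
\]
using that the forward differences $\fx,\fy$ commute. Pointwise, $2|\dev\bbD^+(\bu)|^2=\tfrac12(\fx u-\fy v)^2+\tfrac12(\fy u+\fx v)^2\cdot 2$, which, expanded, equals exactly the above expression. The $\eta$-terms give $\eta\sum(\fx u+\fy v)^2=\eta\sum|\dv^+\bu|^2$. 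This produces the two dissipation sums in \eqref{eq:edi.EK.disc} with equality. The index placements in \eqref{semi2du}, \eqref{semi2dv} must be checked carefully, including the mixed terms $\by\fx v$ versus $\bx\fy v$.

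The main obstacle is the numerical-diffusion block. Its contribution is
\[
h\lambda\sum\Big[(P'(\rho)-\tfrac{|\bu|^2}{2}-\kappa\Ld\rho)\Ld\rho+\bu\cdot\Ld(\rho\bu)\Big],
\]
which must absorb the $O(h)$ residual of the central-difference Euler block, and must supply $-\kappa\lambda h\sum(\Ld\rho)^2$ to yield the last term of \eqref{eq:edi.EK.disc}. For the non-capillary part, I would rewrite it by summation by parts as a sum over edges of $-(\text{jump of } \nabla\tilde\eta)\cdot(\text{jump of }(\rho,\bm))$. Convexity of $\tilde\eta$ makes this nonpositive, and a mean-value estimate with the Hessian of $\tilde\eta$ shows it dominates the central-flux residual, which is bounded by $\tfrac12(|\bu|+\sqrt{p'})$ times the same jump quadratic form. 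This is precisely where the choice \eqref{eq:lambda} enters, following \cite[Theorem 3]{giesselmann2026structurepreservingfinitevolume}.
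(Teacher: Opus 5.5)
Your proposal is correct and follows the paper's proof. The inviscid, capillary and numerical-diffusion contributions are taken over from \cite[Theorem 3]{giesselmann2026structurepreservingfinitevolume}, and the only new ingredient is the periodic summation by parts of the Newtonian viscous terms. Tested with $u_\icj$, $v_\icj$, these give exactly $2\mu\sum_{i,j}|\dev(\bbD^+(\bu_\icj))|^2+\eta\sum_{i,j}|\dv^+\bu_\icj|^2$, as you compute.

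Two small slips do not affect the argument. First, the pointwise identity should read $2|\dev(\bbD^+(\bu))|^2=(\fx u-\fy v)^2+(\fy u+\fx v)^2$. Second, central differences are not entropy conservative for this system: the kinetic-energy terms also leave an $O(h)$ residual, not only the pressure. That residual is precisely what the deferred Rusanov-type argument with \eqref{eq:lambda} absorbs.
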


\begin{Rem}\label{rem:lambda1}
    The choice of $\lambda$ in \eqref{eq:lambda} is actually the smallest choice of $\lambda$ for which we can prove energy dissipation. 
\end{Rem}
\begin{proof}
    The proof is analogous to the proof of \cite[Theorem 3]{giesselmann2026structurepreservingfinitevolume} with the exception of the viscous terms. The shear viscosity terms with coefficient $\mu$ can be treated as follows.
   For any pair of grid functions $\bvarphi^h=(\phi^h, \psi^h)$ we have via summation by parts that
   \begin{align*}
  & \sum_{i=1}^M\sum_{j=1}^N  (\bx\fx u_{i,j} + \by \fy u_{i,j} + \by \fx v_{i,j} - \bx \fy v_{i,j})\phi_{i,j}\\
       &+ \sum_{i=1}^M\sum_{j=1}^N (\bx \fy u_{i,j} + \by \fy v_{i,j} + \bx \fx v_{i,j} - \by \fx u_{i,j})\psi_{i,j}  \\
    &   =-  \sum_{i=1}^M\sum_{j=1}^N  \begin{pmatrix} 
    \fx u_{i,j} - \fy v_{i,j} & \fy u_{i,j} + \fx v_{i,j}\\
    \fy u_{i,j} + \fx v_{i,j} & \fy v_{i,j} - \fx u_{i,j}
    \end{pmatrix}: \begin{pmatrix} 
    \fx \phi_{i,j} & \fy \phi_{i,j} \\
     \fx \psi_{i,j} & \fy \psi_{i,j}
    \end{pmatrix}
    \\
    &=-  \sum_{i=1}^M\sum_{j=1}^N \dev\big(\bbD^+(\bu_\icj)\big):\nabla^+\bvarphi_\icj.
   \end{align*} 
   In the proof of the energy inequality, we use $\phi^h= u^h$ and $\psi^h = v^h$.
   The terms containing $\eta$ can be treated analogously.
\end{proof}

\section{Convergence towards energy-variational solutions}
\label{sec:convergence}

As the main result of this article, we show that the solutions to the 
proposed semi-discrete finite-volume scheme
converge in a suitable topology as $h\to0$,
and that the limit can be identified with an energy-variational solution.
We focus on the case of a barotropic pressure $p(\rho)=k\rho^\gamma$ for some $k>0$ and $\gamma>1$,
which leads to $P(\rho)=\frac{k\rho^\gamma}{\gamma-1}$ and 
to the convex total energy 
\begin{equation}\label{eq:energy.ek}
\mathcal{E}(\rho,\bm)=\int_{\mathbb T^2} \tilde{\eta}(\rho(x),\bm(x))+\frac{\kappa}{2}|\nabla \rho(x)|^2\dd{x},
\end{equation}
where 
$\tilde{\eta}: \R \times \R^2 \rightarrow [0,\infty]$ is given by 
\[
    \tilde{\eta}(\rho, \bm) = \begin{cases}
    \frac{|\bm|^2}{2\rho}+\frac{k\rho^\gamma}{\gamma-1} \quad &\text{if} \quad \rho>0, \\
    0 \quad &\text{if}\quad (\rho,m)=(0,0),\\
    \infty \quad &\text{else}.
       \end{cases}
\]
Moreover, we choose the so-called 
regularity weight $\mathcal K\colon C^2({\mathbb T^2};\R^2)\to[0,\infty)$
with 
\begin{equation}
\label{eq:K}
\calK(\bvarphi)
= 2\|(\nabla \bvarphi)_{\mathrm{sym},-}\|_{L^\infty({\mathbb T^2})}+\max\{(\gamma-1,1)\}\|(\dv\bvarphi)_{-}\|_{L^\infty({\mathbb T^2})}.
\end{equation}
Here $(A)_{\mathrm{sym},-}$ denotes the negative semi-definite part 
of the symmetric matrix $(A)_{\mathrm{sym}}=\frac{1}{2}(A+A^T)$
for $A\in\R^{2\times2}$.

\begin{Def}\label{def:envar.ek}
Let $(\rho_0,\bm_0)\in L^1({\mathbb T^2})\times L^1({\mathbb T^2};\R^2)$ such that $\calE(\rho_0,\bm_0)<\infty$.
A triple 
\begin{equation}
    \label{eq:fct.class}
    (\rho,\bm,E)\in L^\infty(0,T;H^1(\Omega) \cap L^\gamma(\Omega)) \times L^\infty(0,T;L^\frac{2\gamma}{\gamma +1}(\Omega;\R^2)) \times \mathrm{BV}([0,T])
\end{equation}
is called an \emph{energy-variational solution to the Euler--Korteweg system (if $\mu=\eta=0$) or the Navier--Stokes--Korteweg system (if $\mu,\eta>0$)} if $\mathcal{E}(\rho(t),\bm(t)) \leq E(t) $ for a.e.~$t \in (0,T)$ and  if 
\begin{equation}
\label{eq:envar.ek}
\begin{aligned}
    \left[E- \int_{\mathbb T^2} \rho  \psi + \bm \cdot\bvarphi \dd{x} \right]\bigg|_s^t &+\int_s^t \int_{\mathbb T^2} \rho \, \partial_t \psi+ \bm \cdot\nabla \psi + \bm \cdot \partial_t \bvarphi + \left(\frac{\bm {\otimes} \bm}{\rho}+ p(\rho) \mathbb{I} \right){:}\nabla \bvarphi \dd{x} \dd{\tau} \\
    &+\int_s^t\int_{\mathbb T^2}  \bbS(\bu):\bbD(\bu) - \bbS(\bu):\bbD(\bvarphi)\dd{x}\dd{\tau}\\
  &+\int_s^t\int_{\mathbb T^2} \kappa \rho\nabla \rho \cdot \nabla (\dv
       \bvarphi) + 
 \frac{\kappa}{2} |\nabla \rho|^2\dv \bvarphi  + \kappa\nabla \rho \otimes \nabla \rho:\nabla \bvarphi \dd{x}\dd{\tau}\\
     &+ \int_s^t \mathcal{K}(\bvarphi)(\mathcal{E}(\rho,\bm)-E) \dd{\tau}\leq 0
\end{aligned}
\end{equation} 
holds for all test functions $\psi\in C^1([0,T]; C^1({\mathbb T^2}))$
and $\bvarphi \in C^1([0,T];C^2({\mathbb T^2};\R^2))$,
and for a.a.~$s,t\in[0,T]$, $s<t$, including $s=0$ with $\rho(0)=\rho_0,\, \bm(0)=\bm_0$. In the case that $\mu, \eta >0$, 
 it additionally  holds that $\bm=\rho\bu$ and $\bu\in L^2(0,T;H^1({\mathbb T^2};\R^2))$.
\end{Def}

\begin{Rem}
One can split the variational inequality~\eqref{eq:envar.ek}
into the three parts
\begin{align}
    E(t)+\int_s^t\int_{\mathbb T^2}\bbS(\bu):\bbD(\bu)\dd x \dd\tau&\leq E(s),
    \label{eq:envar.ek.enineq}
    \\
    \int_0^T\int_{\mathbb T^2} \rho \, \partial_t \psi+ \bm \cdot\nabla \psi  \dd{x} \dd{\tau} &=-\int_{\mathbb T^2}\rho_0\psi(0)\dd x,
    \label{eq:envar.ek.cont}
    \\
    \begin{split}
    \int_0^T\int_{\mathbb T^2} \bm \cdot\partial_t \bvarphi + \left(\frac{\bm \otimes \bm}{\rho}+ p(\rho) \mathbb{I} \right):\nabla \bvarphi
    -\bbS(\bu)&:\bbD(\bvarphi)
     \dd{x}\dd{\tau}
       \\
    +\int_0^T\int_{\mathbb T^2} \kappa\rho\nabla \rho \cdot \nabla (\dv
       \bvarphi) + \frac{\kappa}{2}  |\nabla \rho|^2\dv \bvarphi  &+ \kappa\nabla \rho \otimes \nabla \rho{:}\nabla \bvarphi \dd{x}\dd{\tau}
 \\
    \leq -\int_{\mathbb T^2}\bm_0\cdot\bvarphi(0)\dd x 
    &+\int_0^T \mathcal{K}(\bvarphi)(E-\mathcal{E}(\rho,\bm)) \dd{\tau}
    \end{split}
    \label{eq:envar.ek.mom}
\end{align}
for a.a.~$0<s<t<T$, all $\psi\in C^1_c([0,T); C^1({\mathbb T^2}))$
and $\bvarphi \in C^1_c([0,T);C^2({\mathbb T^2};\R^2))$.
Here,~\eqref{eq:envar.ek.enineq} corresponds to the energy-dissipation inequality,
while~\eqref{eq:envar.ek.cont} and~\eqref{eq:envar.ek.mom} are weak formulations of the continuity equation and an adaption of the momentum equation, respectively.
Indeed, their collection is equivalent to~\eqref{eq:envar.ek},
which can be seen by classical variational arguments
since $\calK$ is positively $1$-homogeneous.
\end{Rem}

\begin{Rem}
The variational inequality~\eqref{eq:envar.ek.mom} can be equivalently reformulated as a variational equality with additional defect measures.
To define these, we introduce the space  $L^\infty_{w^*}(0,T;\mathbb V^*)$, which denotes the Bochner space of weakly* measurable functions with values in the dual of a Banach space $\mathbb V$. This is the correct dual to $ L^1(0,T;\mathbb V)$ in the case that $ \mathbb V^*$ is not separable,
     compare~\cite[Theorem 6.14]{pedregal1997parametrizedmeasures}.
Inequality~\eqref{eq:envar.ek.mom} holds for a.a.~$0<s<t<T$
and all $\bvarphi \in C^1_c([0,T);C^2({\mathbb T^2};\R^2))$ if and only if there exist two defect measures 
$\mathfrak R_1 \in L^\infty_{w^*} (0,T;\mathcal{M}( {\mathbb T^2} ; \R^{2\times 2}_{\mathrm{sym},+}))$ and  $\mathfrak R_2\in L^\infty_{w^*} (0,T;\mathcal{M}( {\mathbb T^2} ; \R_+)) $, 
such that 
\begin{align*}
      &\int_0^T\int_{\mathbb T^2} \bm \cdot\partial_t \bvarphi + \left(\frac{\bm \otimes \bm}{\rho}+ p(\rho) \mathbb{I} \right){:}\nabla \bvarphi
    -\bbS(\bu):\bbD(\bvarphi)
     \dd{x}\dd{\tau}
       \\
    &\qquad
    +\int_0^T\int_{\mathbb T^2} \kappa\rho\nabla \rho \cdot \nabla (\dv
       \bvarphi) + \frac{\kappa}{2}  |\nabla \rho|^2\dv \bvarphi  + \kappa\nabla \rho \otimes \nabla \rho{:}\nabla \bvarphi \dd{x}\dd{\tau}
 \\
    &\qquad\qquad
    = -\int_{\mathbb T^2}\bm_0\cdot\bvarphi(0)\dd x 
    -\int_0^T \Big(\int_{{{\mathbb T^2}}} \mathbb D( \bvarphi) :\dd \mathfrak R_1 + \int_{{{\mathbb T^2}}} (\dv \bvarphi) \,\dd \mathfrak R_2\Big) \dd{\tau}
\end{align*}
holds for all $\bvarphi \in C^1_c([0,T);C^2({\mathbb T^2};\R^2))$ and the measures are additionally controlled by the energy defect through
\begin{equation}
     \int_{{{\mathbb T^2}}} \dd \max\left\{\frac{1}{2}\tr ( \mathfrak R_1); | \mathfrak R_1|_2\right\}+ \frac{1}{\gamma-1} 
    \int_{{\mathbb T^2}}\dd \mathfrak R_2 \leq E -  \mathcal{E}(\rho,m)\label{eq:EulKorEn}\,
\end{equation}
a.e.~$t\in(0,T)$. 
The term $\max\left\{\frac{1}{2}\tr ( \mathfrak R_1); | \mathfrak R_1|_2\right\}$ should be interpreted as the total variation measure of $ \mathfrak R_1$ with respect to the norm induced by $|A|_m=\max\left\{\frac{1}{2}| A|_{\tr}; | A|_2\right\}$ for a symmetric matrix $A\in \R^{2\times 2}$, where $|\cdot|_{\tr}$ and $|\cdot|_2$ denote the trace norm and the spectral norm, respectively.
Therefore, $(\rho,\bm, E)$ also constitutes a so-called dissipative weak solution.
We refer to the forthcoming preprint~\cite{Emil} for a rigorous proof of such a result.
\end{Rem}

In the Euler--Korteweg case $\mu=\eta=0$
the existence of energy-variational solutions in the sense of Definition~\ref{def:envar.ek} was shown in~\cite[Theorem 3.2]{EiterLasarzikSliwinski_envar} for any dimension $d\in\N$,
where $\Omega\subset\R^d$ was a bounded domain
and suitable boundary conditions were considered. 
This result was derived from an existence theory for a general class of evolution equations,
where energy-variational solutions were constructed through a time-incremental minimization scheme.
The proof can directly be adapted to the present case with periodic boundary conditions.
In this article, we follow a different direction
and show that the sequence of solutions $(\rho^h,\bu^h)$ 
to  the finite-volume scheme~\eqref{eq:EK.approx} converges
and that its limit is an energy-variational solution.

\begin{The}\label{thm:limit}
    Let $\kappa>0$ and $\mu,\eta\geq0$,
    and let $p(\rho)=k\rho^\gamma$ for some $k>0$ and $\gamma>1$.
    Let $(\rho_0,\bu_0)\colon\mathbb T^2\to\R\times\R^2$ be measurable such that $\calE(\rho_0,\rho_0\bu_0)<\infty$.
    Assume that for all $h>0$ 
    the solutions $(\rho^h,\bu^h)$ 
    to the approximate system~\eqref{eq:EK.approx}
    exist in a uniform time interval $(0,T)$ with $T>0$.
    Suppose that $\lambda$ chosen by~\eqref{eq:lambda} satisfies
    \begin{equation}
    \label{eq:lambda.limit}
        \lim_{h\to 0}\lambda h 
        =\lim_{h\to0} \frac{h}{\lambda}=0.
    \end{equation}
    Then there exists $(\rho,\bm,E)$ satisfying~\eqref{eq:fct.class}
    and a (not relabeled) subsequence of $(\rho^h,\bu^h,E^h)_h$
    such that
    \begin{subequations}
    \label{eq:conv.ek}
    \begin{align}
        \rho^h&\xrightharpoonup{*}\rho &&\text{in } L^\infty(0,T;L^\gamma(\mathbb T^2)),
        \label{eq:conv.rho.ws}
        \\        
        \nabla_h^+\rho^h, \nabla_h^-\rho, \nabla_h^c\rho&\rightharpoonup\nabla\rho &&\text{in } L^\infty(0,T;L^2(\mathbb T^2;\R^2)),
        \label{eq:conv.drho.w}
        \\
        \rho^h&\to\rho&&\text{in } C([0,T];L^p(\mathbb T^2)) \text{ for all }p\in[1,\infty),
        \label{eq:conv.rho.s}
        \\
        \rho^h \bu^h &\xrightharpoonup{*}\bm &&\text{in } L^\infty(0,T;L^{\frac{2\gamma}{\gamma+1}}(\mathbb T^2;\R^2)),
        \label{eq:conv.rhou.ws}
        \\
        E^h&\xrightharpoonup{*}E &&\text{in }\BV([0,T]),
        \label{eq:conv.E.ws}
    \end{align}
    \end{subequations}
    as $h\to 0$.
    If $\mu>0$ and $\eta>0$, then it holds $\bm=\rho \bu$ for some $\bu\in L^2(0,T;H^1(\mathbb T^2;\R^2))$ with
    \begin{subequations}
    \label{eq:conv.nsk}
    \begin{align}
         \bu^h&\rightharpoonup\bu &&\text{in } L^2(0,T; L^2(\mathbb T^2;\R^2)),
         \label{eq:conv.u.w}
         \\
         \nabla_h^+ \bu^h&\rightharpoonup \nabla \bu \qquad &&\text{in } L^2(0,T;L^2(\mathbb T^2;\R^{2\times 2})),
        \label{eq:conv.du.w}   
        \\
        \rho^h\bu^h &\to \bm &&\text{in } L^2(0,T;L^q(\mathbb T^2;\R^2)) \text{ for all }q\in[1,2),
        \label{eq:conv.m.s}
    \end{align}
    \end{subequations}
    as $h\to0$.
    In both cases, 
    $(\rho,\bm,E)$ is an energy-variational solution 
    in the sense of Definition~\ref{def:envar.ek}
    with $\mathcal K$ given by~\eqref{eq:K}.
\end{The}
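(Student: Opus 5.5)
The proof proceeds in three stages: uniform a priori bounds from the discrete energy inequality, extraction of convergent subsequences, and passage to the limit in a discrete analogue of~\eqref{eq:envar.ek}.

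\textbf{A priori bounds.} Theorem~\ref{2Ddissipation} gives $E^h(t)\le E^h(0)$, and Jensen's inequality applied to the convex function $\tilde\eta$ on cell averages bounds $\int\tilde\eta(\rho^h_0,\bm^h_0)$ by $\int\tilde\eta(\rho_0,\bm_0)$. Similarly, $\nabla^+_h\rho^h_0$ is an average of $\nabla\rho_0$ over a shifted cell, so $\|\nabla_h^+\rho_0^h\|_{L^2}\le\|\nabla\rho_0\|_{L^2}$. Hence $E^h(0)\le\calE(\rho_0,\bm_0)$, and in fact $E^h(0)\to\calE(\rho_0,\bm_0)$. This yields uniform bounds on:
\begin{itemize}
\item $\rho^h$ in $L^\infty(L^\gamma)$;
\item $\nabla_h^\pm\rho^h$ in $L^\infty(L^2)$, and therefore $\nabla_h^c\rho^h$ as their average;
\item $\sqrt{\rho^h}\bu^h$ in $L^\infty(L^2)$, which by H\"older gives $\rho^h\bu^h$ in $L^\infty(L^{2\gamma/(\gamma+1)})$;
\item $\lambda h\|D_h^2\rho^h\|^2_{L^2L^2}$;
\item in the NSK case, $\bbD^+$ and $\dv^+$ of $\bu^h$ in $L^2L^2$.
\end{itemize}
Since $E^h$ is nonincreasing and bounded, it is bounded in $\BV$, and Helly's theorem gives~\eqref{eq:conv.E.ws} with pointwise convergence everywhere.

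\textbf{Compactness.} Weak-* limits give~\eqref{eq:conv.rho.ws} and~\eqref{eq:conv.rhou.ws}. The limit of $\nabla^\pm_h\rho^h$ is identified as $\nabla\rho$ by discrete summation by parts against smooth test functions, which gives~\eqref{eq:conv.drho.w}. For strong convergence, note that $\rho^h$ is bounded in a discrete $H^1$, which controls space translates in $L^2$. Time equicontinuity comes from~\eqref{eq:EK.approx.cont}: testing with smooth $\psi$ and summing by parts, $\partial_t\rho^h$ is bounded in $L^\infty(W^{-1,r})$. The term $\lambda h D^2_h\rho^h$ contributes $\lambda h\|\nabla_h^+\rho^h\|$, which is bounded because $\lambda h\to0$.

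An Aubin--Lions/Kolmogorov argument (or Arzel\`a--Ascoli in $C([0,T];H^{-1})$ combined with interpolation against the $H^1$ bound) then gives $\rho^h\to\rho$ in $C([0,T];L^2)$. Interpolating with the $L^\infty(L^p)$ bounds for all finite $p$, which follow from discrete Sobolev embedding in 2D, gives~\eqref{eq:conv.rho.s}.

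In the NSK case, I first obtain a bound on $\bu^h$ in $L^2(H^1_h)$. A discrete Korn inequality on the torus controls $\nabla^+_h\bu^h$ by $\dev\bbD^+$ and $\dv^+$, up to the mean of $\bu^h$. The mean is controlled because $\rho^h$ is strongly convergent with fixed positive total mass, combined with the $L^2$ bound on $\sqrt{\rho^h}\bu^h$ (a standard Poincar\'e-type argument with density weight). This gives~\eqref{eq:conv.u.w}--\eqref{eq:conv.du.w}. Then $\rho^h\bu^h\to\rho\bu$ in $L^2(L^q)$ for $q<2$, as the product of a strongly convergent factor in $C(L^p)$ and a weakly convergent factor in $L^2(L^r)$ for large $r$. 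This gives~\eqref{eq:conv.m.s} and $\bm=\rho\bu$.

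\textbf{Passage to the limit.} I test~\eqref{eq:EK.approx.cont}--\eqref{eq:EK.approx.mom.v} with the cell averages of $\psi,\bvarphi$ and add the discrete energy inequality~\eqref{eq:edi.EK.disc}, together with $\int_s^t\calK(\bvarphi)(E^h_{\mathrm{pot}}-E^h)\,\dd\tau\le0$, where $E^h$ is now the current value. After discrete summation by parts, this produces a discrete version of~\eqref{eq:envar.ek} with consistency errors.

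The linear terms, the pressure term (by strong convergence of $\rho^h$), and the term $\kappa\rho\nabla\rho\cdot\nabla\dv\bvarphi$ (strong times weak) converge directly. The artificial viscosity terms vanish, since $\lambda h\int\nabla_h\bm^h\cdot\nabla\bvarphi$ is bounded by $\lambda h\, h^{-1}\|\bm^h\|$ after shifting differences onto $\bvarphi$, or by $\sqrt{\lambda h}$-type estimates.

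The nonconvex terms are handled by convexity. I group
\[
\frac{\bm\otimes\bm}{\rho}:\nabla\bvarphi+\frac{\kappa}{2}|\nabla\rho|^2\dv\bvarphi+\kappa\nabla\rho\otimes\nabla\rho:\nabla\bvarphi+\calK(\bvarphi)\Big(\tilde\eta+\frac{\kappa}{2}|\nabla\rho|^2\Big).
\]
The choice~\eqref{eq:K} ensures that this integrand is a convex function of $(\rho,\bm,\nabla\rho)$, pointwise in $x$: the negative parts of $\nabla\bvarphi$ are dominated by $\calK$ times the energy, and the pressure part is handled by the $\max\{\gamma-1,1\}$ factor. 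This is exactly the convexity structure used in~\cite{EiterLasarzikSliwinski_envar}. The resulting functional is weakly lower semicontinuous, so the $\liminf$ of these terms is bounded below by their value at the limit. The term $-\calK E^h$ converges since $E^h\to E$ pointwise and boundedly. The viscous term $\bbS(\bu^h):\bbD(\bu^h)$ is likewise lower semicontinuous, while $\bbS(\bu^h):\bbD(\bvarphi)$ is linear. Endpoint values at a.e.\ $s,t$ converge by the $C(L^p)$ convergence and weak continuity of $\bm^h$ in time; the case $s=0$ uses the convergence of the initial data. Finally, $\calE(\rho,\bm)\le E$ follows by lower semicontinuity of $\calE$, taken together with $E^h_{\mathrm{pot}}\le E^h$.

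\textbf{Main obstacle.} The hardest step is the consistency analysis of the discrete Korteweg flux in~\eqref{eq:EK.approx.mom.u}. After summation by parts against $\bvarphi$, the products of discrete gradients appear with shifted arguments, such as $\Dhmy\rho^h\,\Dhmy\rho^h(\cdot+h_x\be_x)$ and $\Dhcx\rho^h\,\Dhpy\rho^h$, and the term $\rho^h D^2_h\rho^h(\cdot+h_x\be_x)$ is only controlled through $\lambda h$. These must be rewritten as $|\nabla^+_h\rho^h|^2$-type quadratic forms plus errors, so that the convexity argument applies. My first attempt is to use $ab=\frac12(a^2+b^2)-\frac12(a-b)^2$ with shifts. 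The difference terms $h\,\nabla_h^2\rho^h$ are then controlled by $h\|D^2_h\rho^h\|\,\|\nabla_h\rho^h\|\lesssim\sqrt{h/\lambda}\cdot\sqrt{\lambda h\|D_h^2\rho^h\|^2}$, which tends to zero precisely by~\eqref{eq:lambda.limit}. The term $\rho^hD_h^2\rho^h$ is summed by parts onto $\nabla\rho^h$ and $\nabla\dv\bvarphi$, using strong convergence of $\rho^h$.
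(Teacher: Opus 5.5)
There is a genuine gap in how you obtain the strong convergence \eqref{eq:conv.m.s}. Your argument is that $\rho^h\bu^h\to\rho\bu$ strongly in $L^2(0,T;L^q)$ because $\rho^h$ converges strongly in $C([0,T];L^p)$ and $\bu^h$ converges weakly in $L^2(0,T;L^r)$. But the product of a strongly and a weakly convergent sequence only converges weakly. That is enough to identify $\bm=\rho\bu$, but not to prove \eqref{eq:conv.m.s}.

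The bounds you use cannot rule out temporal oscillations. For example, $\rho^h\equiv1$ and $\bu^h(t,x)=\sin(t/h)\boldsymbol w(x)$ with smooth $\boldsymbol w$ satisfy all of them, yet $\rho^h\bu^h$ has no strongly convergent subsequence.

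What is missing is time compactness of the momentum, and this can only come from the discrete momentum equation. The paper gets it as follows:
\begin{itemize}
\item It tests \eqref{eq:EK.approx.mom.u}--\eqref{eq:EK.approx.mom.v} with $\bvarphi\in C^1_c((0,T);C^2(\mathbb T^2;\R^2))$. Every flux (convective, pressure, viscous, artificial viscosity and Korteweg) is then bounded by the energy estimates, using $\lambda h\to0$ and $h/\lambda\to0$. This gives the uniform bound \eqref{eq:bound.m.dt} on $\partial_t\bm^h$.
\item It bounds $\nabla_h^+\bm^h$ uniformly in $L^2(0,T;L^q)$ via the discrete product rule, the discrete Sobolev embedding and the Korn--Poincar\'e inequality, see \eqref{eq:bound.m.dx}.
\item The discrete Rellich--Kondrachov theorem and the Aubin--Lions lemma then yield \eqref{eq:conv.m.s}.
\end{itemize}

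The same missing estimate affects your limit passage in the endpoint terms. You claim $\int_{\mathbb T^2}\bm^h(t)\cdot\bvarphi(t)\dd x\to\int_{\mathbb T^2}\bm(t)\cdot\bvarphi(t)\dd x$ for a.e.~$t$ \emph{by weak continuity of $\bm^h$ in time}. This needs equicontinuity of $\bm^h$ in some weak topology, uniformly in $h$. Neither the energy bounds nor weak-$*$ convergence in $L^\infty(0,T;L^{2\gamma/(\gamma+1)})$ provides that. You can repair it in either of two ways:
\begin{itemize}
\item Prove the $\partial_t\bm^h$ bound above. Its derivation does not use $\mu,\eta>0$, and it gives equicontinuity in a negative Sobolev norm, hence $\bm^h(t)\rightharpoonup\bm(t)$ for every $t$.
\item Do as the paper does and avoid pointwise-in-time limits of $\bm^h$. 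Pass to the limit separately in the energy inequality \eqref{eq:envar.ek.enineq}, where only $E^h(t)\to E(t)$ from Helly is needed, in the continuity equation, and in the momentum inequality \eqref{eq:envar.ek.mom} with test functions in $C^1_c([0,T);C^2)$, so that only initial values appear. Then recombine the three into \eqref{eq:envar.ek} using the positive $1$-homogeneity of $\calK$.
\end{itemize}

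Apart from this, your single combined inequality is a legitimate variant of the paper's split, and your convexity argument with $\calK$ matches the paper's.

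Finally, a smaller slip: the artificial viscosity term in the momentum equation is not controlled by $\lambda h\,h^{-1}\|\bm^h\|$, which need not vanish. Put both difference quotients on $\bvarphi$ to get the bound $\lambda h\,\|\bm^h\|_{L^\infty(0,T;L^{2\gamma/(\gamma+1)})}\|D_h^2\bvarphi\|_{L^1(0,T;L^{2\gamma/(\gamma-1)})}\to0$.
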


\begin{Rem}
    As pointed out in Remark \ref{rem:lambda1}, the parameter $\lambda$ could also be chosen larger than in \eqref{eq:lambda}, i.e. $\lim_{h \rightarrow 0} \tfrac{h}{\lambda}=0$ is not really a restriction. The situation is more complex for $\lim_{h \rightarrow 0} {h}{\lambda}=0$. In our experience, in stable numerical simulations that do not specifically study vacuum, $\rho^h$ is uniformly-in-$h$ bounded from above and below and $|\bu^h|$ is uniformly-in-$h$ bounded from above, so that one can choose $\lambda$ uniform-in-$h$ and \eqref{eq:lambda.limit} is satisfied. However, strictly speaking, we cannot rule out that $\lambda$ goes to infinity for $h \rightarrow 0$.
\end{Rem}

\begin{Rem}\label{thm:better.conv}
    If $\mu,\eta>0$ and if the 
    density remains uniformly bounded from below,
    we can further improve the convergence
    and obtain strong convergence of the velocity.
    More precisely, let $\underline\rho>0$ such that $\rho^h\geq\underline\rho$ for all $h>0$.
    Then it holds
    \begin{equation}
        \bu^h \to \bu \quad\text{in } L^2(0,T;L^q(\mathbb T^2;\R^2)) \text{ for all }q\in[1,2).
        \label{eq:conv.u.s}
    \end{equation}
    Indeed, since
    \[
    \bu^h-\bu=\frac{\bm^h-\bm}{\rho^h}+\bm\Big(\frac{1}{\rho^h}-\frac{1}{\rho}\Big),
    \]
    the convergence~\eqref{eq:conv.u.s} follows from~\eqref{eq:conv.rho.s}, \eqref{eq:conv.m.s} and dominated convergence.
\end{Rem}

\begin{Rem}[Convergence of the initial values]
The  approximate initial values $ \rho_0^h$ and $\bm_0^h$ in~\eqref{eq:definitial} are defined via local averages  through~\eqref{eq:initialdata.cellwise}.  As the energy is finite in the initial values, $ \mathcal E(\rho_0,\bm_0) < \infty $, we know that $ \rho_0 \in L^\gamma(\mathbb T^2) $ as well as $ \bm_0 \in L^{\frac{2\gamma}{\gamma+1}}(\mathbb T^2)$. Standard theory implies the strong convergence
$$
\begin{aligned}
    \rho^h_0&\to\rho_0 &&\text{in } L^\gamma(\mathbb T^2),
        \quad&& \quad       
        \bm_0^h\to \bm_0 &&\text{in } L^{\frac{2\gamma}{\gamma+1}}(\mathbb T^2;\R^2),
        \quad && \quad
        \nabla^+_h\rho_0^h \to  \nabla \rho_0&&\text{in } L^2(\mathbb T^2;\R^2)
\end{aligned}
$$
    as $h\to 0$. Moreover, for $ \rho_0 (x) > \underline{\rho}>0 $ for a.e.~$x\in \mathbb T^2$, we observe from the convexity of the mapping  $ (\rho ,\bm) \mapsto \tilde{\eta}(\rho, \bm) + \frac{\kappa}{2}|\nabla^+_h \rho|^2$  and Jensen's inequality that 
    $$   \tilde{\eta}(\rho_0^h(x),\bm_0^h(x))  \leq \tilde{\eta}(\rho_0(x), \bm_0(x)) \,.$$ Due to $ \mathcal E(\rho_0,\bm_0) < \infty $, the right-hand side is integrable,
    and by Lebesgue's dominated convergence theorem, we observe that 
    $$
    \lim_{h\searrow 0} \int_{\mathbb T^2} \tilde{\eta}(\rho_0^h(x),\bm_0^h(x)) \dd x = \int_{\mathbb T^2} \tilde{\eta}(\rho_0(x),\bm_0(x)) \dd x \,.
    $$
    In conclusion, the energy-variational solutions constructed in Theorem~\ref{thm:limit} additionally fulfill $ E(0)= \mathcal E(\rho_0 , \bm_0) = \lim_{h\searrow0}E^h(0) $. 
\end{Rem}

To prove Theorem~\ref{thm:limit}, 
we begin by the derivation of suitable \textit{a priori} estimates.
The energy-dissipation inequality~\eqref{eq:edi.EK.disc}
directly implies uniform bounds for $(\rho^h,\bu^h)$,
which lead to a weakly convergent subsequence.
To obtain the asserted strong convergence properties for $\rho^h$, 
we derive additional estimates of the time derivative $\partial_t\rho^h$. 
An analogous estimate for $\partial_t\bm^h$ is derived later.
To shorten notation, we write 
\[
    \bbD_h^+(\bu^h):=\frac{1}{2}\big(\nabla_h^+ \bu^h + (\nabla_h^+ \bu^h)^T\big)
    \qquad
    \dv^+_h\bu^h:=\Dhpx u^h+\Dhpy v^h.
\]

\begin{Lem}\label{lem:bounds}
   Under the assumptions of Theorem \ref{thm:limit}, there exists a constant $C>0$,
   independent of $h$, $\kappa$, $\mu$ and $\eta$, such that
    \begin{equation}
    \label{eq:bounds.direct}
    \begin{aligned}
        &\normn{\rho^h |\bu^h|^2}_{L^\infty(0,T;L^1(\mathbb T^2))}
        +\normn{P(\rho^h)}_{L^\infty(0,T;L^1(\mathbb T^2))}
        +\sqrt{\kappa}\normn{\nabla^+_h \rho^h}_{L^\infty(0,T;L^2(\mathbb T^2))}
        \\
        &+\sqrt{2\mu}\norml{\dev\big(\bbD_h^+(\bu^h)\big) }_{L^2(0,T;L^2(\mathbb T^2))}
        +\sqrt{\eta}\normn{\dv^+_h\bu^h}_{L^2(0,T;L^2(\mathbb T^2))}
        \\
        &+\sqrt{\lambda h}\normn{\nabla^+_h \rho^h }_{L^2(0,T;L^2(\mathbb T^2))}
        +\sqrt{\kappa\lambda h}\normn{D_h^2\rho^h}_{L^2(0,T;L^2(\mathbb T^2))}
        +\normn{E^h}_{\BV([0,T])}\leq C.
    \end{aligned}
    \end{equation}
    Moreover, 
    we have
    \begin{align}
        \label{eq:bounds.rhogamma}
        \normn{\rho^h}_{L^\infty(0,T;L^\gamma(\mathbb T^2))}
        +\normn{\rho^h u^h}_{L^\infty(0,T;L^\frac{2\gamma}{\gamma+1}(\mathbb T^2))}
        &\leq C,
   \\
        \label{eq:bound.tdrho}
        \norml{\partial_t\rho^h}_{L^\infty(0,T;W^{-1,\frac{2\gamma}{\gamma+1}}(\mathbb T^2))+ L^2(0,T;H^{-1}(\mathbb T^2))}&\leq C(1+\sqrt{\lambda h}).
    \end{align}
\end{Lem}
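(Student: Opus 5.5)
Integrating the discrete energy-dissipation inequality~\eqref{eq:edi.EK.disc} of Theorem~\ref{2Ddissipation} over $(0,t)$ for $t<T$ gives
\[
E^h(t)+\int_0^t \Big(2\mu\norml{\dev(\bbD_h^+(\bu^h))}_{L^2}^2+\eta\normn{\dv^+_h\bu^h}_{L^2}^2+\kappa\lambda h\normn{D^2_h\rho^h}_{L^2}^2\Big)\dd\tau\le E^h(0).
\]
By the Remark on convergence of the initial values (Jensen's inequality for the cell averages), $E^h(0)$ is bounded uniformly in $h$. For $\rho_0$ without a positive lower bound, I would instead use the convexity of $\tilde\eta$ together with $\normn{\nabla_h^+\rho_0^h}_{L^2}\le\normn{\nabla\rho_0}_{L^2}$, which follows since a difference quotient of cell averages is an average of $\nabla\rho_0$. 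Since $E^h\ge0$ and both summands of $e^h$ are nonnegative, this bounds $\rho^h|\bu^h|^2$, $P(\rho^h)$ and $\sqrt\kappa\nabla^+_h\rho^h$ in $L^\infty(0,T;L^1)$ resp.\ $L^\infty L^2$, as well as the three dissipative terms. Because $E^h$ is nonincreasing and nonnegative, $\normn{E^h}_{\BV}\le 2E^h(0)+\normn{E^h}_{L^1}\le C$. The dependence of $C$ only on the initial energy (independent of $\kappa,\mu,\eta$) holds provided the terms are weighted as stated.

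Two bounds do not come directly from the energy. The first is the term $\sqrt{\lambda h}\,\nabla_h^+\rho^h$ in $L^2L^2$. Here I would test the discrete continuity equation~\eqref{eq:EK.approx.cont} with $\rho^h$: summation by parts gives $\frac12\frac{\dd}{\dd t}\normn{\rho^h}_{L^2}^2+\lambda h\normn{\nabla^+_h\rho^h}_{L^2}^2=-\int\rho^h\Dhcx(\rho^hu^h)+\dots$. The convective term is the obstacle. I would sum it by parts onto $\nabla^c_h\rho^h$ and bound it by $\normn{\rho^h\bu^h}_{L^{2\gamma/(\gamma+1)}}\normn{\nabla_h^c\rho^h}_{L^{2\gamma/(\gamma-1)}}$, which is handled through the already known $L^\infty L^2$ bound on $\nabla^+_h\rho^h$ together with a discrete Sobolev embedding. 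The $L^2$ norm of $\rho^h$ is controlled by the discrete Poincaré inequality, using mass conservation (Theorem~\ref{2Dconservation}) and the $\nabla^+_h\rho^h$ bound. This yields the claimed bound on $T\cdot C$. Alternatively, and more simply, I would use $\sqrt{\lambda h}\normn{\nabla_h^+\rho^h}_{L^2L^2}\le\sqrt{\lambda hT}\,\normn{\nabla_h^+\rho^h}_{L^\infty L^2}$ together with $\lambda h\to0$ from~\eqref{eq:lambda.limit}. The catch is that this requires $\kappa$-dependence, and so the PDE-test route is needed for $\kappa$-independence. I expect reconciling this $\kappa$-independence to be the delicate point.

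The second is~\eqref{eq:bounds.rhogamma}. Since $P(\rho)=k\rho^\gamma/(\gamma-1)$, the bound on $P(\rho^h)$ gives $\rho^h$ in $L^\infty L^\gamma$. Hölder's inequality then gives $|\rho^h\bu^h|=\sqrt{\rho^h}\sqrt{\rho^h}|\bu^h|$, so
\[
\normn{\rho^h\bu^h}_{L^{2\gamma/(\gamma+1)}}\le\normn{\rho^h}_{L^\gamma}^{1/2}\normn{\rho^h|\bu^h|^2}_{L^1}^{1/2}.
\]

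For~\eqref{eq:bound.tdrho}, I would test~\eqref{eq:EK.approx.cont} with $\phi\in C^\infty(\mathbb T^2)$ and perform discrete summation by parts, which moves $\Dhcx$ to $-\Dhcx\phi$ and $D_h^2$ to the product $\nabla^+_h\rho^h\cdot\nabla^+_h\phi$. This gives
\[
|\langle\partial_t\rho^h,\phi\rangle|\le\normn{\rho^h\bu^h}_{L^{2\gamma/(\gamma+1)}}\normn{\nabla^c_h\phi}_{L^{2\gamma/(\gamma-1)}}+\lambda h\normn{\nabla_h^+\rho^h}_{L^2}\normn{\nabla^+_h\phi}_{L^2}.
\]
Difference quotients of smooth functions are bounded in $L^q$ by $\normn{\nabla\phi}_{L^q}$, since each is an average of $\nabla\phi$. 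So the first part lies in $L^\infty W^{-1,2\gamma/(\gamma+1)}$ and the second has $L^2H^{-1}$ norm at most $\sqrt{\lambda h}\cdot\sqrt{\lambda h}\normn{\nabla^+_h\rho^h}_{L^2L^2}\le C\sqrt{\lambda h}$. This gives the factor $C(1+\sqrt{\lambda h})$, where $\lambda h$ is bounded by~\eqref{eq:lambda.limit}.
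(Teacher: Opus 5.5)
Apart from one term, your argument is the paper's. You integrate \eqref{eq:edi.EK.disc}, bound $E^h(0)$ by Jensen, and get the $\BV$ bound from monotonicity of $E^h$. You use H\"older for \eqref{eq:bounds.rhogamma}, and test \eqref{eq:EK.approx.cont} with $\phi$ and use duality for \eqref{eq:bound.tdrho}.

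The gap is in the route you call necessary for $\sqrt{\lambda h}\,\normn{\nabla_h^+\rho^h}_{L^2(0,T;L^2(\mathbb T^2))}$, namely testing the continuity equation with $\rho^h$. You claim that $\normn{\nabla_h^c\rho^h}_{L^{2\gamma/(\gamma-1)}}$ is controlled by the $L^\infty L^2$ bound on $\nabla_h^+\rho^h$ plus a discrete Sobolev embedding; this step fails. Since $\frac{2\gamma}{\gamma-1}>2$ for every $\gamma>1$, you need higher integrability of the difference quotients themselves. A Sobolev embedding for $\nabla_h^c\rho^h$ would require uniform control of its discrete gradients, i.e.\ of second differences of $\rho^h$. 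The only such control available, $\sqrt{\kappa\lambda h}\,\normn{D_h^2\rho^h}_{L^2(0,T;L^2(\mathbb T^2))}\le C$, degenerates as $\lambda h\to0$. Interpolating it with the $L^\infty L^2$ bound (Gagliardo--Nirenberg) bounds the time integral of the convective term only by a quantity growing like $(\lambda h)^{-1/(2\gamma)}$. Moreover, even if the route closed, it would not give $\kappa$-independence. It uses $\sqrt{\kappa}\,\normn{\nabla_h^+\rho^h}_{L^\infty L^2}\le C$, and a Poincar\'e bound for $\normn{\rho^h}_{L^2}$ built on the same quantity.

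What you call the alternative is the correct argument: $\sqrt{\lambda h}\,\normn{\nabla_h^+\rho^h}_{L^2(0,T;L^2(\mathbb T^2))}\le\sqrt{\lambda hT}\,\normn{\nabla_h^+\rho^h}_{L^\infty(0,T;L^2(\mathbb T^2))}$, with $\lambda h$ bounded by \eqref{eq:lambda.limit}. The paper reads all of \eqref{eq:bounds.direct} off \eqref{eq:edi.EK.approx} and gives no further argument for this term. So it too effectively carries a factor $\kappa^{-1/2}$ there. Its constant also depends on $\kappa$ through the initial energy, which contains $\frac{\kappa}{2}\normn{\nabla\rho_0}_{L^2}^2$. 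The stated independence of $\kappa$ therefore has to be read loosely, i.e.\ as dependence only through the data and the weighted norms. This is harmless: $\kappa>0$ is fixed in Theorem~\ref{thm:limit}, and this bound is only used to make $\lambda h$-weighted terms vanish, e.g.\ in \eqref{eq:bound.tdrho} and in the limit of \eqref{eq:cont.disc.weak}. Drop the $\rho^h$-testing argument and commit to the simple estimate; with that change your proof is complete.
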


\begin{proof}
From~\eqref{eq:edi.EK.disc}
we directly have
\begin{equation}
\label{eq:edi.EK.approx}
\begin{aligned}
&\int_{\mathbb T^2} \frac{1}{2}\rho^h(t) |\bu^h(t)|^2 + P(\rho^h(t))+\frac{\kappa}{2}|\nabla^+_h\rho^h(t)|^2 \dd x
\\
&
+\int_0^t\int_{\mathbb T^2} 2\mu \big|\dev\big(\bbD_h^+(\bu^h)\big)\big|^2
+\eta\big|\dv_h^+\bu^h\big|^2
+\kappa \lambda h |D^2_h\rho^h|^2 \dd x \dd \tau
\\
&\qquad
\leq
\int_{\mathbb T^2} \frac{|\bm^h_0|^2}{2\rho^h_0}  + P(\rho^h_0)+\frac{\kappa}{2}|\nabla_h^+\rho^h_0|^2 \dd x.
\end{aligned}
\end{equation}
Observe that the integral on the right-hand side is uniformly bounded due to $\calE(\rho_0,\rho_0\bu_0)<\infty$ and Jensen's inequality.
Therefore,~\eqref{eq:edi.EK.approx}
and the monotonicity of $E^h$, which follows directly from~\eqref{eq:edi.EK.disc}, 
give~\eqref{eq:bounds.direct}.

Since $p(\rho)=k\rho^\gamma$ for some $k>0$, $\gamma>1$,
it holds $P(\rho)=k(\gamma-1)^{-1}\rho^\gamma$.
As $\rho^h u^h = \sqrt{\rho^h} (\sqrt{\rho^h}u^h)$,
we thus conclude~\eqref{eq:bounds.rhogamma} from H\"older's inequality and~\eqref{eq:bounds.direct}.
To obtain a bound for $\partial_t\rho^h$,
consider $\phi\in C(0,T;C^1({\mathbb T^2}))$.
By~\eqref{eq:EK.approx.cont} and discrete integration by parts for difference quotients, we have
\[
\begin{aligned}
&\int_0^T\int_{\mathbb T^2}\partial_t\rho^h\phi\dd x \dd t
=\int_0^T\int_{\mathbb T^2} 
-\Dhcx(\rho^h u^h)\phi -\Dhcy(\rho^h v^h)\phi  +\lambda h D_h^2\rho^h\phi \dd x \dd t
\\
&\ =\int_0^T\int_{\mathbb T^2} 
\rho^h \bu^h \cdot\nabla_h^c\phi -\lambda h \nabla^+_h\rho^h \cdot\nabla^+_h\phi \dd x \dd t
\\
&\ \leq C\big(\norml{\rho^h \bu^h}_{L^\infty(0,T;L^{\frac{2\gamma}{\gamma+1}}({\mathbb T^2}))}\norml{\nabla\phi}_{L^1(0,T;L^\frac{2\gamma}{\gamma-1}({\mathbb T^2}))}+\lambda h\norml{\nabla_h^+\rho^h}_{L^2(0,T;L^2({\mathbb T^2}))}\norml{\nabla\phi}_{L^2(0,T;L^2({\mathbb T^2}))}\big)
\end{aligned}
\]
since
\[
\norml{\nabla^\pm_h\phi(t)}_{L^p({\mathbb T^2})}
+\norml{\nabla_h^c\phi(t)}_{L^p({\mathbb T^2})}
\leq C\norml{\nabla\phi(t)}_{L^p({\mathbb T^2})}
\]
for any $p\in[1,\infty]$.
Using~\eqref{eq:bounds.direct},
this shows
that 
\[
\partial_t\rho^h\in \big(L^1(0,T;W^{1,\frac{2\gamma}{\gamma-1}}({\mathbb T^2}))\cap L^2(0,T;H^1({\mathbb T^2}))\big)^*
\simeq
L^\infty(0,T;W^{-1,\frac{2\gamma}{\gamma+1}}({\mathbb T^2}))+ L^2(0,T;H^{-1}({\mathbb T^2}))
\]
with the uniform bound~\eqref{eq:bound.tdrho}.
\end{proof}

While the claimed weak convergence follows directly from the previous 
uniform bounds, 
we use 
the Aubin--Lions lemma to conclude strong convergence
by invoking the compactness of specific embeddings.
To use those in the spatially discretized setting,
we define the $| \cdot |_{\mathrm{dG},2}$-seminorm on 
\[
\drhos:=\big\{
f^h\in L^2({\mathbb T^2})\mid f^h|_{T} \text{ is constant for all } T\in\mathcal T_h
\big\},
\]
where $\mathcal T_h=\big\{\Omega_{\icj}\mid i=1,\dots,M,\,j=1,\dots,N\big\}$,
by
\[
    | f^h|_{\mathrm{dG},2}^2 := \sum_{i=1}^M\sum_{j=1}^N (f_{i,j}- f_{i-1,j})^2+ (f_{i,j}- f_{i,j-1})^2,
\]
where $f^h|_{\Omega_{\icj}}\equiv f_\icj\in\R$ for $\icj=1,\dots,N$.
Note that for the piecewise constant functions considered here,
this definition coincides with the norm defined in \cite[eqn.~(5.1)]{diPietroErn}.
However, as we consider periodic boundary conditions, this merely defines a seminorm
in the present context.

As we see below, $L^2$-bounds of the discrete gradients
directly lead to a bound of the $|\cdot|_{\mathrm{dG}}$-seminorm.
To derive a uniform estimate of the velocity if $\mu,\eta>0$, 
we further use the following discrete Korn--Poincar\'e inequality.

\begin{Lem}
\label{lem:KornPoincare}
Let $\{\mathcal{T}_h\}_{h \in \mathcal{H}}$ a family of meshes.
Let $M,K>0$ and $\gamma>1$.
Then there exists $C=C(M,K,\gamma)>0$, independent of $h$, such that 
\begin{equation}
\label{eq:KornPoincare}
\normn{\bv^h}_{L^2({\mathbb T^2};\R^2)}
+|\bv^h|_{\mathrm{dG},2}
\leq C\Big({\norml{\nabla_h^+\bv^h+(\nabla_h^+\bv^h)^T}_{L^2({\mathbb T^2};\R^{2\times 2})}+\int_{{\mathbb T^2}}r|\bv^h|\dd x}\Big)
\end{equation}
for all $\bv^h\in \dvs$, $h>0$ and 
 all measurable functions $r\colon\mathbb T^2\to\R$ with
\begin{equation}
\label{eq:properties.r.KP}
r\geq 0,
\qquad
\int_{\mathbb T^2}r\,\dd x \geq M,
\qquad
\int_{\mathbb T^2}r^\gamma \leq K.
\end{equation}
\end{Lem}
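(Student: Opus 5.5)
The plan is to separate the estimate into a discrete Korn inequality, which controls the full discrete gradient by its symmetric part, and a Poincaré-type inequality with weight $r$, which controls $\normn{\bv^h}_{L^2}$ by the gradient plus the weighted mean. The weighted part will be proved by contradiction and compactness, which is why $C$ depends only on $(M,K,\gamma)$.

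\emph{Step 1 (discrete Korn on the torus).} Because of the periodicity and the tensor-product structure, the forward difference operators $\Dhpx,\Dhpy$ commute. Discrete summation by parts (twice) then gives
\[
\int_{\mathbb T^2}\Dhpy u^h\,\Dhpx v^h\dd x=\int_{\mathbb T^2}\Dhpx u^h\,\Dhpy v^h\dd x ,
\]
exactly as in the continuous periodic identity $2\|\bbD(\bv)\|^2=\|\nabla\bv\|^2+\|\dv\bv\|^2$. Expanding $\norml{\nabla_h^+\bv^h+(\nabla_h^+\bv^h)^T}_{L^2}^2$ with this identity yields
\[
\normn{\nabla_h^+\bv^h}_{L^2}^2\le \tfrac12\norml{\nabla_h^+\bv^h+(\nabla_h^+\bv^h)^T}_{L^2}^2 .
\]
This holds with a constant independent of $h$. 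Since $h_x=\theta_xh$ and $h_y=\theta_yh$ with fixed ratios, the seminorm $|\bv^h|_{\mathrm{dG},2}$ equals $\normn{\nabla^+_h\bv^h}_{L^2}$ up to factors depending only on $\theta_x,\theta_y$. Hence the seminorm part of \eqref{eq:KornPoincare} is bounded by the symmetric-gradient term alone.

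\emph{Step 2 (weighted Poincaré by contradiction).} It remains to show $\normn{\bv^h}_{L^2}\le C\big(\normn{\nabla_h^+\bv^h}_{L^2}+\int r|\bv^h|\dd x\big)$ uniformly.
1. Suppose not. Then there are meshes $h_n$, functions $\bv_n\in\mathbb P^0(\mathcal T_{h_n};\R^2)$ with $\normn{\bv_n}_{L^2}=1$, and weights $r_n$ satisfying \eqref{eq:properties.r.KP}, such that $\normn{\nabla_{h_n}^+\bv_n}_{L^2}\to0$ and $\int r_n|\bv_n|\to0$.
2. The admissible mesh sizes form a discrete set. So, after passing to a subsequence, either the mesh is fixed or $h_n\to0$.
3. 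On a fixed mesh everything is finite dimensional. Then $\bv_n\to\bv$ with $\nabla_h^+\bv=0$, so $\bv$ is a constant $c$ with $|c|^2|\mathbb T^2|=1$.
4. If $h_n\to0$, the bound on the discrete gradient controls translates: $\normn{\bv_n(\cdot+z)-\bv_n}_{L^2}\le C(|z|+h_n)\normn{\nabla_{h_n}^+\bv_n}_{L^2}$. By Kolmogorov–Riesz, a subsequence converges strongly in $L^2$, and the same translate estimate shows the limit is again a constant $c$ with $|c|^2|\mathbb T^2|=1$.
5. In either case $\bv_n\to c$ in $L^2$.

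\emph{Step 3 (passing to the limit in the weight).} This is the step I expect to be the main obstacle. The function $r_n$ is only bounded in $L^\gamma$, so I need $\bv_n\to c$ in $L^{\gamma'}$, and $\gamma'$ may exceed $2$. I would use the discrete Sobolev embedding for broken polynomial spaces \cite{diPietroErn}: in two dimensions, $\normn{\bv_n-\bar\bv_n}_{L^q}\le C_q|\bv_n|_{\mathrm{dG},2}$ for every $q<\infty$, uniformly in $h$. Here $\bar\bv_n$ is the mean; the estimate follows from a periodic adaptation of their argument, or by applying it to a periodic extension. This gives boundedness of $\bv_n$ in $L^{q}$ for some $q>\gamma'$, and interpolation with the $L^2$ convergence yields $\bv_n\to c$ in $L^{\gamma'}$. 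Then
\[
\int r_n|\bv_n|\dd x\ \ge\ |c|\int r_n\dd x-\int r_n|\bv_n-c|\dd x\ \ge\ |c|M-K^{1/\gamma}\normn{\bv_n-c}_{L^{\gamma'}}\ \longrightarrow\ |c|M>0,
\]
which contradicts $\int r_n|\bv_n|\to0$. Combining Steps 1 to 3 gives \eqref{eq:KornPoincare} with $C=C(M,K,\gamma)$; the dependence on $\theta_x,\theta_y$ is implicit in the fixed mesh family.
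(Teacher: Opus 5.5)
Your overall argument works and is organized differently from the paper's, but one identity in Step~1 is false and needs a one-line repair.

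\textbf{The error in Step~1.} Two discrete summations by parts do not return forward differences in both variables. From $\int_{\mathbb T^2}w\,\Dhpx f\dd x=-\int_{\mathbb T^2}\Dhmx w\,f\dd x$, its analogue in $y$, and $\Dhmx\Dhpy=\Dhpy\Dhmx$, one gets
\[
\int_{\mathbb T^2}\Dhpy u^h\,\Dhpx v^h\dd x=\int_{\mathbb T^2}\Dhmx u^h\,\Dhmy v^h\dd x .
\]
The identity you state would instead require $\Dhmy\Dhpx=\Dhmx\Dhpy$, which is false. For instance, with $M,N\ge 3$, $u^h=1_{\Omega_{i,j}}$ and $v^h=1_{\Omega_{i+1,j-1}}$, the left-hand side of your identity equals $1$ and the right-hand side equals $0$.

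The conclusion of Step~1 survives. Expand the square and use the shift invariance $\normn{\Dhpx u^h}_{L^2}=\normn{\Dhmx u^h}_{L^2}$ and $\normn{\Dhpy v^h}_{L^2}=\normn{\Dhmy v^h}_{L^2}$. The correct identity then gives
\[
\norml{\nabla_h^+\bv^h+(\nabla_h^+\bv^h)^T}_{L^2}^2=2\normn{\nabla_h^+\bv^h}_{L^2}^2+2\normn{\Dhmx u^h+\Dhmy v^h}_{L^2}^2 .
\]
So the discrete counterpart of $2\|\mathbb D(\bv)\|^2=\|\nabla\bv\|^2+\|\dv\bv\|^2$ involves the backward divergence, and your bound $\normn{\nabla_h^+\bv^h}_{L^2}^2\le\frac12\norml{\nabla_h^+\bv^h+(\nabla_h^+\bv^h)^T}_{L^2}^2$ holds.

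\textbf{Comparison with the paper.} With this correction your argument is complete. The paper runs a single contradiction argument for the whole left-hand side, normalized by $\normn{\bv_n}_{L^2}+|\bv_n|_{\mathrm{dG},2}=1$. It extracts $r_n\rightharpoonup r$ in $L^\gamma$ and a limit $\bv$ (in $\mathbb P^0(\mathcal T_h;\R^2)$ if $h_n\to h>0$, in $H^1$ via di Pietro--Ern's discrete compactness if $h_n\to0$), and then defers to the continuous proof of Feireisl--Novotn\'y. That sketch leaves implicit the two points you make explicit:
\begin{itemize}
\item a uniform discrete Korn inequality, needed to pass from $\bv_n\to0$ in $L^2$ to $|\bv_n|_{\mathrm{dG},2}\to0$ and so contradict the normalization;
\item convergence of $\bv_n$ in $L^{\gamma'}$, $\gamma'=\gamma/(\gamma-1)$, which may exceed $2$.
\end{itemize}
Your split obtains the Korn part from an exact identity with an explicit constant and no compactness. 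It confines compactness to a scalar Poincar\'e inequality and never needs a limit of $r_n$. The price is reliance on the periodic Cartesian structure. The paper's route mirrors the continuous theory and would transfer to meshes where only an abstract uniform discrete Korn inequality is available.

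\textbf{A simplification.} Since Step~3 already uses the uniform bound $\normn{\bv-\overline{\bv}}_{L^q}\le C_q|\bv|_{\mathrm{dG},2}$ for the mean $\overline{\bv}$, you can drop Step~2 entirely. H\"older's inequality gives $M|\overline{\bv}|\le\int_{\mathbb T^2}r|\bv|\dd x+K^{1/\gamma}C_{\gamma'}|\bv|_{\mathrm{dG},2}$. Combined with $\normn{\bv}_{L^2}\le|\mathbb T^2|^{1/2}|\overline{\bv}|+C_2|\bv|_{\mathrm{dG},2}$ and Step~1, this proves the lemma directly, without contradiction and with an explicit constant.
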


\begin{proof}
   It is clear that \eqref{eq:KornPoincare} holds for each $h$ separately, since both sides define norms on the finite-dimensional space $\dvs$.
   We need to prove that the constant $C$ can be chosen uniform in $h$ and $r$.
    This can be shown following the proof of~\cite[Theorem 11.23]{FeireslNovotny2017singularlimits}, where a similar result was derived for a Lipschitz domain $\Omega\subset\R^N$ with $N>2$.
   We only sketch the parts that differ slightly from~\cite[Theorem 11.23]{FeireslNovotny2017singularlimits}.
   For a proof by contradiction, let us assume that there exist a sequence $h_n$ such that $h_n \rightarrow h\in[0,\infty)$ and functions $\bv_n \in \mathbb{P}^0(\mathcal{T}_{h_n}, \mathbb{R}^2)$ 
   and $r_n\in L^\gamma(\mathbb T^2)$ such that
   \begin{align}
       \normn{\bv_n}_{L^2(\mathbb T^2;\R^2)}
        +|\bv_n|_{\mathrm{dG},2} &=1,\\
       {\norml{\nabla_h^+\bv_n+(\nabla_h^+\bv_n)^T}_{L^2(\mathbb T^2;\R^{2\times 2})}+\int_{\mathbb T^2}r_n|\bv_n|\dd x} &\leq \frac{1}{n},
   \end{align}
    and $r_n\rightharpoonup r$ in $L^\gamma(\mathbb T^2)$ for some $r$ satisfying~\eqref{eq:properties.r.KP}.
    Moreover, the sequence $\bv_n$ admits a weakly convergent subsequence with limit $\bv\in\mathbb{P}^0(\mathcal{T}_{h}, \mathbb{R}^2)$
    if $h>0$, and with $\bv \in H^1(\mathbb T^2; \mathbb{R}^2)$ due to \cite[Thm. 5.6]{diPietroErn}. 
    From this, we derive a contradiction as in the proof of \cite[Theorem 11.23]{FeireslNovotny2017singularlimits}.
\end{proof}

From the uniform bounds in Lemma~\ref{lem:bounds},
we conclude the existence of convergent subsequences
as claimed in Theorem~\ref{thm:limit}.

\begin{Lem}
    \label{lem:convergence}
    There exists a triple $(\rho,\bm,E)$ satisfying~\eqref{eq:fct.class}
    and a (not relabeled) subsequence of $(\rho^h,\bu^h,E^h)_h$
    that satisfy~\eqref{eq:conv.ek}.
    Moreover, if $\mu,\eta>0$, then there exists $\bu\in L^2(0,T;H^1(\mathbb T^2;\R^2))$ 
    with $\bm=\rho \bu$ 
    and such that~\eqref{eq:conv.u.w} and~\eqref{eq:conv.du.w} hold.
\end{Lem}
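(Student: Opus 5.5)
We extract limits from the uniform bounds of Lemma~\ref{lem:bounds} by weak-* compactness and upgrade to strong convergence of $\rho^h$ via an Aubin--Lions argument in discrete form.

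\textbf{Weak limits.} By~\eqref{eq:bounds.rhogamma}, Banach--Alaoglu gives a subsequence with $\rho^h\xrightharpoonup{*}\rho$ in $L^\infty(0,T;L^\gamma)$ and $\rho^h\bu^h\xrightharpoonup{*}\bm$ in $L^\infty(0,T;L^{2\gamma/(\gamma+1)})$. These spaces are duals of separable spaces because $\gamma>1$. Since $E^h$ is nonincreasing and bounded in $\BV([0,T])$, Helly's selection theorem gives $E^h\xrightharpoonup{*}E$ in $\BV$, with pointwise convergence everywhere. The bound on $\sqrt\kappa\,\nabla_h^+\rho^h$ in $L^\infty(0,T;L^2)$ yields a weak-* limit $\bm g$. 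We identify $\bm g=\nabla\rho$ by discrete integration by parts against smooth $\phi$:
\[
\int\nabla_h^+\rho^h\,\phi=-\int\rho^h\,\nabla_h^-\phi\to-\int\rho\nabla\phi ,
\]
using $\nabla^-_h\phi\to\nabla\phi$ uniformly. The same argument applies to $\nabla_h^-\rho^h$ and $\nabla_h^c\rho^h$, which are shifts or averages of shifts of $\nabla_h^+\rho^h$ and hence bounded in the same way. This gives~\eqref{eq:conv.rho.ws}, \eqref{eq:conv.drho.w} and $\rho\in L^\infty(0,T;H^1)$.

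\textbf{Strong convergence of $\rho^h$.} Mass is conserved, and $\nabla_h^+\rho^h$ is bounded in $L^\infty(L^2)$. Hence $\rho^h$ is bounded in $L^\infty(0,T;X_h)$, where $X_h$ is $\drhos$ normed by $\|\cdot\|_{L^2}+h^{-1}|\cdot|_{\mathrm{dG},2}$; the factor $h^{-1}$ rescales the jumps to difference quotients. By the discrete Rellich theorem (\cite[Thm.~5.6]{diPietroErn}), sets bounded in this norm are relatively compact in $L^p$ for every $p<\infty$ in two dimensions, using the discrete Sobolev embedding. Combining this with the time-derivative bound~\eqref{eq:bound.tdrho} in $L^1(0,T;H^{-s})$, where $s$ is large, we apply a discrete Aubin--Lions lemma, for instance the version of Gallou\"et--Latch\'e. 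It gives compactness in $C([0,T];L^p)$: equicontinuity in $H^{-s}$ follows from the $L^\infty+L^2$-in-time bound, and interpolating with the uniform compactness in space gives~\eqref{eq:conv.rho.s}. The factor $\sqrt{\lambda h}$ in~\eqref{eq:bound.tdrho} is harmless because $\lambda h\to0$ by~\eqref{eq:lambda.limit}.

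I expect this to be the main obstacle. Fitting the piecewise-constant spaces into Aubin--Lions needs care, as does reaching $C([0,T];L^p)$ rather than only $L^p(0,T;L^p)$. To handle the latter, I would prove equicontinuity in a weak norm and interpolate via the uniform $L^\infty(H^1_h)$ bound.

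\textbf{The viscous case $\mu,\eta>0$.} The bound on $\dev\bbD_h^+(\bu^h)$ and $\dv_h^+\bu^h$ controls the full symmetric gradient, since $|\bbD|^2=|\dev\bbD|^2+\frac12(\tr\bbD)^2$. We apply Lemma~\ref{lem:KornPoincare} pointwise in time with $r=\rho^h(t)$. This is admissible because $\int\rho^h=\int\rho_0>0$ by Theorem~\ref{2Dconservation}, and $\int(\rho^h)^\gamma\le K$. The term $\int\rho^h|\bu^h|\le\|\rho^h\|_{L^1}^{1/2}\|\sqrt{\rho^h}\bu^h\|_{L^2}$ is bounded in $L^\infty(0,T)$. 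Hence $\bu^h$ is bounded in $L^2(0,T;L^2)$, and $h^{-1}|\bu^h|_{\mathrm{dG},2}$, equivalently $\nabla_h^+\bu^h$, is bounded in $L^2(L^2)$. Extracting weak limits and identifying the gradient limit by discrete integration by parts, exactly as for $\rho$, gives~\eqref{eq:conv.u.w}, \eqref{eq:conv.du.w} and $\bu\in L^2(H^1)$. Finally, strong convergence of $\rho^h$ in $C(L^p)$ for large $p$ together with weak convergence of $\bu^h$ in $L^2(L^2)$ gives $\rho^h\bu^h\rightharpoonup\rho\bu$ in $L^1$. By uniqueness of limits, $\bm=\rho\bu$.
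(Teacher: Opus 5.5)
Your proposal is correct and follows essentially the paper's route: Banach--Alaoglu (or Helly) for the weak limits, identification of the limits of $\nabla_h^\pm\rho^h$ and $\nabla_h^c\rho^h$ by discrete integration by parts, the discrete Rellich theorem of \cite{diPietroErn} combined with an Aubin--Lions argument based on~\eqref{eq:bound.tdrho}, Lemma~\ref{lem:KornPoincare} with $r=\rho^h(t)$, and strong-times-weak convergence to identify $\bm=\rho\bu$. One normalization slip needs fixing: in two dimensions $|f^h|_{\mathrm{dG},2}^2=\sum_{i,j}h_x^2(\fx f_\icj)^2+h_y^2(\fy f_\icj)^2$ is already comparable to $\normn{\nabla_h^+f^h}_{L^2}^2$ (cell area $h_xh_y\sim h^2$), so both the norm on $X_h$ and the conclusion of your Korn--Poincar\'e step should use $|\cdot|_{\mathrm{dG},2}$ itself rather than $h^{-1}|\cdot|_{\mathrm{dG},2}$, which is not uniformly bounded.
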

\begin{proof}
    The existence of elements $\rho$, $\bm$ and $E$ as in~\eqref{eq:conv.rho.ws},~\eqref{eq:conv.rhou.ws} and~\eqref{eq:conv.E.ws} follows directly from Lemma~\ref{lem:bounds} and the Banach--Alaoglu compactness theorem.
    In the same way, we conclude the existence of
    functions $\boldsymbol g^+$, $\boldsymbol g^-$, $\boldsymbol g^c$ such that
    \[
        \nabla_h^+\rho^h\rightharpoonup \boldsymbol g^+, \ \nabla_h^-\rho^h\rightharpoonup \boldsymbol g^-, \ 
        \nabla_h^c\rho^h\rightharpoonup \boldsymbol g^c \qquad\text{in } L^\infty(0,T;L^2(\mathbb T^2;\R^2)),
    \]
    as $h\to0$.
    One readily sees that $\boldsymbol g^+=\boldsymbol g^-=\boldsymbol g^c=\nabla\rho$
    in the distributional sense,
    which yields~\eqref{eq:conv.drho.w}.
    Similarly,~\eqref{eq:bound.tdrho} implies 
    \begin{equation}\label{eq:conv.dtrho.ws}
    \partial_t\rho^h\xrightharpoonup{*}\partial_t\rho \quad \text{in } 
    L^\infty(0,T;W^{-1,\frac{2\gamma}{\gamma+1}}(\mathbb T^2))+ L^2(0,T;H^{-1}(\mathbb T^2)).
    \end{equation}
    We further have
    \[
    \begin{aligned}
    | \rho^h(t)|_{\mathrm{dG},2}^2 
    &=\sum_{i,j=1}^N h_x^2(\fx \rho_\icj(t))^2+ h_y^2(\fy \rho_\icj(t))^2
    \\
    &\leq C\sum_{i,j=1}^N h_x h_y \big((\fx \rho_\icj(t))^2+(\fy \rho_\icj(t))^2\big)
    \leq C\normn{\nabla^+_h \rho^h(t)}_{L^\infty(0,T;L^2(\mathbb T^2))}^2,
    \end{aligned}
    \] 
    for a.a.~$t\in(0,T)$, where the left-hand side is uniformly bounded.
    Combining the discrete Rellich--Kondrachov theorem from~\cite[Theorem 5.6]{diPietroErn}
    with~\eqref{eq:conv.dtrho.ws},
    we conclude the strong convergence~\eqref{eq:conv.rho.s}
    from the Aubin--Lions lemma.    
    
    If $\mu>0$, we argue as before to see
    \[
    \normn{\bu^h}_{L^2(0,T;L^2(\mathbb T^2;\R^2))}^2
    +\normn{\nabla^+_h\bu^h}_{L^2(0,T;L^2(\mathbb T^2;\R^{2\times 2}))}^2
    \leq
    \normn{\bu^h}_{L^2(0,T;L^2(\mathbb T^2;\R^2))}^2
    +C\int_0^T|\bu^h|_{\mathrm{dG},2}^2 \dd t.
    \]
    Since the right-hand side is uniformly bounded due to Lemma~\ref{lem:KornPoincare}
    and Lemma~\ref{lem:bounds},
    we conclude the existence of $\bu$ with~\eqref{eq:conv.u.w} and~\eqref{eq:conv.du.w}.
    Moreover, a combination of the strong convergence~\eqref{eq:conv.rho.s} and the weak convergence~\eqref{eq:conv.u.w} yields
    $\rho^h\bu^h\rightharpoonup\rho\bu$ in $L^1(0,T;L^q(\mathbb T^2))$
    for all $q\in[1,2)$,
    so that $\rho\bu=\bm$ in virtue of~\eqref{eq:conv.rhou.ws}.
\end{proof}

To complete the proof of Theorem~\ref{thm:limit},
it remains to pass to the limit 
in the discretized formulation~\eqref{eq:edi.EK.disc} in a suitable way.
To this end, we invoke 
the regularity weight~$\calK$ from~\eqref{eq:K}.
Observe that the strong convergence~\eqref{eq:conv.m.s} is not necessary for this limit passage,
and we postpone its verification to the end of the proof.

\begin{proof}[Proof of Theorem~\ref{thm:limit}]
First of all, the function $E^h$ is nonincreasing for all $h>0$
by~\eqref{eq:edi.EK.disc}.
By the weak* convergence from~\eqref{eq:conv.E.ws} and Helly's selection theorem,
we deduce that $E$ satisfies~\eqref{eq:envar.ek.enineq}.
By weak lower semicontinuity, we also infer $\calE(\rho,m)\leq E$ a.e.
from the trivial inequality $\int_{\mathbb T^2} e^h(t)\dd x\leq E^h(t)$.

To obtain~\eqref{eq:envar.ek.cont},
we multiply the formulation~\eqref{eq:EK.approx.cont} with $\psi\in C^1([0,T); C^1(\mathbb T^2))$
and integrate in space and time.
Using (discrete) integration by parts, this leads to
\begin{equation}
	\int_0^T\int_{\mathbb T^2}-\rho^h\partial_t\psi - (\rho^h \bu^h)\cdot\nabla_h^c\psi +\lambda h \nabla_h^+\rho^h\cdot\nabla_h^+\psi\dd x \dd t
    =\int_{\mathbb T^2}\rho_0^h\psi(0)\dd x .
    \label{eq:cont.disc.weak}
\end{equation}
Note that the Cauchy--Schwarz inequality and~\eqref{eq:bounds.direct}  yield
\[
\begin{aligned}
\int_0^T\int_{\mathbb T^2}\lambda h \nabla_h^+\rho^h\cdot\nabla_h^+\psi\dd x \dd t
&\leq \lambda h \normn{\nabla_h^+\rho^h}_{L^2(0,T;L^2(\mathbb T^2))}\normn{\nabla_h^+\psi}_{L^2(0,T;L^2(\mathbb T^2))}
\\
&\leq \sqrt{\lambda h}\normn{\nabla_h^+\psi}_{L^2(0,T;L^2(\mathbb T^2))}\to 0
\end{aligned}
\]
as $h\to0$.
In virtue of the weak* convergence from~\eqref{eq:conv.rho.ws} and~\eqref{eq:conv.rhou.ws},
we thus obtain~\eqref{eq:envar.ek.cont} by passing to the limit $h\to0$ in~\eqref{eq:cont.disc.weak}.

To obtain~\eqref{eq:envar.ek.mom}, let $\bvarphi=(\varphi_x,\varphi_y)\in C^1([0,T);C^2(\mathbb T^2;\R^2))$.
We multiply the formulation~\eqref{eq:EK.approx.mom.u} with $\varphi_x$ 
and~\eqref{eq:EK.approx.mom.v} with $\varphi_y$
and add both identities.
Proceeding as before, we obtain
\begin{equation}
    \begin{split}
	&\int_0^T\int_{\mathbb T^2} -\rho^h \bu^h\cdot\partial_t\bvarphi -(\rho^h \bu^h\otimes\bu^h+p(\rho^h)\bbI ):\nabla_h^c\bvarphi 
    -\lambda h  \rho^h \bu^h\cdot D^2_h\bvarphi 
    \dd x\dd t 
    \\
    &\quad
    + \int_0^T\int_{\mathbb T^2} 
    2\mu\dev\big(\bbD_h^+(\bu^h)\big):\nabla_h^+\bvarphi
+\eta(\dv_h^+\bu^h)\,(\dv_h^+\bvarphi)
    \dd x \dd t
    \\
    &\quad
    + \int_0^T\int_{\mathbb T^2} \kappa \Big[ 
    \frac{\rho^h D^2_h\rho^h(\cdot+h_x\be_x)+\rho^h(\cdot+h_x\be_x) D^2_h\rho^h}{2}\Dhpx\varphi_x
    - \demi (\Dhpx\rho^h)^2\Dhpx\varphi_x
    \\
    &\qquad\qquad\qquad
    +\frac{ \rho^h D^2_h\rho^h(\cdot+h_y\be_y)+\rho^h(\cdot+h_y\be_y) D^2_h\rho^h}{2}\Dhpy\varphi_y
    - \demi (\Dhpy\rho^h)^2\Dhpy\varphi_y
    \\
    &\qquad\qquad\qquad
    +\demi\Dhmy\rho^h\Dhmy \rho^h(\cdot+h_x\be_x)\Dhpx\varphi_x
    +\demi\Dhmx\rho^h\Dhmx \rho^h(\cdot+h_y\be_y)\Dhpy\varphi_y
    \\
    &\qquad\qquad\qquad
    -\Dhcx\rho^h\, \Dhpy \rho^h\Dhpy\varphi_x
    -\Dhcy\rho^h\, \Dhpx \rho^h\Dhpx\varphi_y
    \Big] \dd x \dd t
    \\
    &\qquad
    =\int_{\mathbb T^2} \rho_0^h \bu_0^h\cdot\bvarphi(0)\dd x.
    \end{split}
    \label{eq:mom.disc.weak.1}
\end{equation}
Combining discrete integration by parts with the discrete product rule
\[
D^\pm_{h,x} [fg]
=D^\pm_{h,x}f\,g(\cdot\pm h_x\be_x) + f\, D^\pm_{h,x}g,
\qquad
D^\pm_{h,y} [fg]
=D^\pm_{h,y}f\,g(\cdot\pm h_y\be_y) + f\, D^\pm_{h,y}g,
\]
we further have
\[
\begin{aligned}
&\int_{\mathbb T^2} \frac{\rho^h D^2_h\rho^h(\cdot+h_x\be_x)+\rho^h(\cdot+h_x\be_x) D^2_h\rho^h}{2} \Dhpx\varphi_x\dd x
\\
&\qquad
=-\frac{1}{2}\int_{\mathbb T^2}\Dhmx\rho^h\Dhmx\rho^h(\cdot+h_x\be_x)\Dhpx\varphi_x(\cdot-h_x\be_x)
+\rho^h\Dhmx\rho^h(\cdot+h_x\be_x)\Dhmx\Dhpx\varphi_x\dd x
\\
&\qquad\qquad
-\frac{1}{2}\int_{\mathbb T^2}\Dhmy\rho^h\Dhmy\rho^h(\cdot+h_x\be_x)\Dhpx\varphi_x(\cdot-h_y\be_y)
+\rho^h\Dhmy\rho^h(\cdot+h_x\be_x)\Dhmy\Dhpx\varphi_x\dd x
\\
&\qquad\qquad
-\frac{1}{2}\int_{\mathbb T^2} \Dhmx\rho^h(\cdot+h_x\be_x) \Dhmx\rho^h \Dhpx\varphi_x
+\rho^h\Dhmx\rho^h\Dhmx\Dhpx\varphi_x\dd x
\\
&\qquad\qquad
-\frac{1}{2}\int_{\mathbb T^2} 
\Dhmy\rho^h(\cdot+h_x\be_x) \Dhmy\rho^h \Dhpx\varphi_x(\cdot-h_y\be_y)
+ \rho^h(\cdot+h_x\be_x)\Dhmy\rho^h\Dhmy\Dhpx\varphi_x\dd x
\\
&\qquad
=-\int_{\mathbb T^2} \Dhpx\rho^h \Dhmx\rho^h \Dhcx\varphi_x
+ \rho^h\Dhcx\rho^h\Dhmx\Dhpx\varphi_x\dd x
\\
&\qquad\qquad
-\int_{\mathbb T^2}
\Dhmy\rho^h \Dhmy\rho^h(\cdot+h_x\be_x)\Dhpx\varphi_x(\cdot-h_y\be_y)\dd x
\\
&\qquad\qquad
-\int_{\mathbb T^2}\frac{\rho^h\Dhmy\rho^h(\cdot+h_x\be_x)+\rho^h(\cdot+h_x\be_x)\Dhmy\rho^h}{2}\Dhmy\Dhpx\varphi_x \dd x,
\end{aligned}
\]
and analogously,
\[
\begin{aligned}
&\int_{\mathbb T^2} \frac{\rho^h D^2_h\rho^h(\cdot+h_y\be_y)+\rho^h(\cdot+h_y\be_y) D^2_h\rho^h}{2} \Dhpy\varphi_y\dd x
\\
&\qquad
=-\int_{\mathbb T^2} \Dhpy\rho^h \Dhmy\rho^h \Dhcy\varphi_y
+ \rho^h\Dhcy\rho^h\Dhmy\Dhpy\varphi_y\dd x
\\
&\qquad\qquad
-\int_{\mathbb T^2}\Dhmx\rho^h \Dhmx\rho^h(\cdot+h_y\be_y) \Dhpy\varphi_y(\cdot-h_x\be_x)\dd x
\\
&\qquad\qquad
-\int_{\mathbb T^2}\frac{\rho^h\Dhmx\rho^h(\cdot+h_y\be_y)+\rho^h(\cdot+h_y\be_y)\Dhmx\rho^h}{2}\Dhmx\Dhpy\varphi_y \dd x.
\end{aligned}
\]
If we write $\bm^h=\rho^h\bu^h$,
and use the identities
\[
\begin{aligned}
\frac{\rho^h\Dhmx\rho^h(\cdot+h_y\be_y)+\rho^h(\cdot+h_y\be_y)\Dhmx\rho^h}{2}
&=\rho^h\Dhmx\rho^h+\frac{h_y}{2}  \rho^h \Dhpy\Dhmx\rho^h+\frac{h_y}{2}\Dhmx\rho^h\Dhpy\rho^h,
\\
\frac{\rho^h\Dhmy\rho^h(\cdot+h_x\be_x)+\rho^h(\cdot+h_x\be_x)\Dhmy\rho^h}{2}
&=\rho^h\Dhmy\rho^h+\frac{h_x}{2}  \rho^h \Dhpx\Dhmy\rho^h+\frac{h_x}{2}\Dhmy\rho^h\Dhpx\rho^h,
\end{aligned}
\]
this allows us to reformulate~\eqref{eq:mom.disc.weak.1}
as
\begin{equation}
    \begin{split}
    &\int_0^T\int_{\mathbb T^2} \bm^h\cdot\partial_t\bvarphi +\Big(\frac{\bm^h\otimes\bm^h}{\rho^h}+p(\rho^h)\bbI \Big) :\nabla_h^c\bvarphi
    +\lambda h \bm^h\cdot D^2_h\bvarphi \dd x\dd t 
    \\
    &
    + \int_0^T\int_{\mathbb T^2} 
    -2\mu\dev\big(\bbD_h^+(\bu^h)\big):\nabla_h^+\bvarphi
    -\eta(\dv_h^+\bu^h)\,(\dv_h^+\bvarphi)
    \dd x \dd t
    \\
	  &
    + \int_0^T\int_{\mathbb T^2} \kappa \Big[ 
    \big[\rho^h\Dhmx\rho^h+\frac{h_y}{2}  \rho^h \Dhpy\Dhmx\rho^h+\frac{h_y}{2}\Dhmx\rho^h\Dhpy\rho^h\big]\Dhmy\Dhpx\varphi_x
    \\
    &\qquad\qquad\qquad
    +\big[\rho^h\Dhmy\rho^h+\frac{h_x}{2}  \rho^h \Dhpx\Dhmy\rho^h+\frac{h_x}{2}\Dhmy\rho^h\Dhpx\rho^h\big]\Dhmx\Dhpy\varphi_y
    \\
    &\qquad\qquad\qquad
    + \rho^h\Dhcx\rho^h\Dhmx\Dhpx\varphi_x+ \rho^h\Dhcy\rho^h\Dhmy\Dhpy\varphi_y
    \\
    &\qquad\qquad\qquad
    +\demi (\Dhpx\rho^h)^2 \Dhpx\varphi_x
    + \demi (\Dhpy\rho^h)^2 \Dhpy\varphi_y
    \\
    &\qquad\qquad\qquad
    +\Dhmy\rho^h \Dhmy\rho^h(\cdot+h_x\be_x)\big[\Dhpx\varphi_x(\cdot-h_y\be_y)-\demi\Dhpx\varphi_x\big]
    \\
    &\qquad\qquad\qquad
    +\Dhmx\rho^h \Dhmx\rho^h(\cdot+h_y\be_y) \big[\Dhpy\varphi_y(\cdot-h_x\be_x)-{\demi\Dhpy\varphi_y }\big]
    \\
    &\qquad\qquad\qquad
    +\Dhpx\rho^h \Dhmx\rho^h \Dhcx\varphi_x
    +\Dhpy\rho^h \Dhmy\rho^h \Dhcy\varphi_y
    \\
    &\qquad\qquad\qquad
    +\Dhcx\rho^h\, \Dhpy \rho^h\Dhpy\varphi_x
    +\Dhcy\rho^h\, \Dhpx \rho^h\Dhpx\varphi_y
    \Big]  
    + \calK(\bvarphi)e^h\dd x\dd t
    \\
    &\qquad
    =-\int_{\mathbb T^2} \bm_0^h\cdot\bvarphi(0)\dd x
    +\int_0^T \calK(\bvarphi)E^h\dd t,
    \end{split}
    \label{eq:mom.disc.weak.2}
\end{equation}
where we multiplied with $-1$ and added the term $\calK(\bvarphi)E^h$ on both sides,
where $\calK$ is defined in~\eqref{eq:K}.

To pass to the limit in~\eqref{eq:mom.disc.weak.2} in a suitable way, 
we first consider the linear terms on the left-hand side of~\eqref{eq:mom.disc.weak.2}.
By~\eqref{eq:conv.rhou.ws} and the bounds from~\eqref{eq:bounds.direct},
we can argue as for the continuity equation to conclude
\[
\lim_{h\to0}
\int_0^T\int_{\mathbb T^2} \bm^h\cdot\partial_t\bvarphi +\lambda h \bm^h\cdot D^2_h\bvarphi  \dd x\dd t 
=\int_0^T\int_{\mathbb T^2} \bm\cdot\partial_t\bvarphi \dd x\dd t .
\]
In particular, here we use that 
\[
\int_0^T\int_{\mathbb T^2}\lambda h \bm^h\cdot D^2_h\bvarphi\dd x \dd t
\leq \lambda h \normn{\bm^h}_{L^\infty(0,T;L^\frac{2\gamma}{\gamma+1}({\mathbb T^2}))}\normn{D^2_h\bvarphi}_{L^1(0,T;L^\frac{2\gamma}{\gamma-1}({\mathbb T^2}))}
\to 0
\]
as $h\to0$ due to~\eqref{eq:bounds.rhogamma}
and~\eqref{eq:lambda.limit}.
For $\mu,\eta>0$, by linearity of the viscous terms
and the weak convergence from~\eqref{eq:conv.du.w}, we further have
\[
\begin{aligned}
\lim_{h\to0}
&\int_0^T\int_{\mathbb T^2} -2\mu\dev\big(\bbD_h^+(\bu^h)\big):\nabla_h^+\bvarphi
    -\eta(\dv_h^+\bu^h)\,(\dv_h^+\bvarphi)\dd x\dd t 
\\
&=\int_0^T\int_{\mathbb T^2} -\bbS(\bu):\nabla\bvarphi\dd x\dd t 
=\int_0^T\int_{\mathbb T^2} -\bbS(\bu):\bbD(\bvarphi)\dd x\dd t .
\end{aligned}
\]
Moreover, we use~\eqref{eq:conv.E.ws} to pass to the limit on the right-hand side of~\eqref{eq:mom.disc.weak.2}, 
which yields
\[
\lim_{h\to0}\int_{\mathbb T^2} - \bm_0^h\cdot\bvarphi(0)\dd x+\int_0^T \calK(\bvarphi)E^h\dd t
=\int_{\mathbb T^2} - \bm_0\cdot\bvarphi(0)\dd x +\int_0^T \calK(\bvarphi)E\dd t.
\]
Concerning the nonlinear terms, we first consider those with second-order difference quotients of $\bvarphi$. 
Combining the strong convergence~\eqref{eq:conv.rho.s} 
with the weak convergence~\eqref{eq:conv.drho.w},
we obtain
\[
\begin{aligned}
\lim_{h\to\infty}&\int_0^T\int_{\mathbb T^2} \kappa \Big[ 
    \big[\rho^h\Dhmx\rho^h+\frac{h_y}{2}  \rho^h \Dhpy\Dhmx\rho^h+\frac{h_y}{2}\Dhmx\rho^h\Dhpy\rho^h\big]\Dhmy\Dhpx\varphi_x
    \\
    &\qquad\qquad
    +\big[\rho^h\Dhmy\rho^h+\frac{h_x}{2}  \rho^h \Dhpx\Dhmy\rho^h+\frac{h_x}{2}\Dhmy\rho^h\Dhpx\rho^h\big]\Dhmx\Dhpy\varphi_y
    \\
    &\qquad\qquad
    + \rho^h\Dhcx\rho^h\Dhmx\Dhpx\varphi_x+ \rho^h\Dhcy\rho^h\Dhmy\Dhpy\varphi_y
    \Big] \dd x \dd t
    \\
    &=\int_0^T\int_{\mathbb T^2} \kappa \rho\nabla\rho\cdot\nabla\dv\bvarphi \dd x \dd t
\end{aligned}
\]
since we have $\normn{\nabla^+_h\rho^h}_{L^2(0,T;L^2({\mathbb T^2}))} \leq C$ and
\begin{equation}\label{eq:limit.D^2h.rho}
h\normn{D^2_h\rho^h}_{L^2(0,T;L^2({\mathbb T^2}))}
\leq \frac{C h}{\sqrt{\lambda h}}\to 0
\end{equation}
as $h\to 0$
by the uniform bounds from~\eqref{eq:bounds.direct}
and the assumption~\eqref{eq:lambda.limit}.
To treat the remaining nonlinear terms, we recall~\eqref{eq:energy.discrete}
and combine them with 
certain contributions from the term $\mathcal K(\bvarphi)e^h$.
For the convection term, we observe that
\[
\begin{aligned}
&\frac{\bm^h\otimes\bm^h}{\rho^h}:\nabla_h^c\bvarphi
+2\|(\nabla \bvarphi)_{\mathrm{sym},-}\|_{L^\infty({\mathbb T^2})}\frac{|\bm^h|^2}{2\rho^h}
\\
&\qquad
=\frac{\bm^h\otimes\bm^h}{\rho^h}:\Big(\nabla\bvarphi+\|(\nabla \bvarphi)_{\mathrm{sym},-}\|_{L^\infty({\mathbb T^2})}\bbI \Big)
+\frac{\bm^h\otimes\bm^h}{\rho^h}:\Big(\nabla_h^c\bvarphi-\nabla\bvarphi\Big).
\end{aligned}
\]
The first term is convex in $(\rho^h,\bm^h)$ and nonnegative
since the term in parenthesis is a positive semidefinite matrix,
and the second term converges to $0$ as $h\to0$.
By the weak* convergence from~\eqref{eq:conv.rho.ws} and~\eqref{eq:conv.rhou.ws}, 
we can thus pass to the limit inferior in the associated integral functional
and obtain
\[
\begin{aligned}
\liminf_{h\to0}&\int_0^T\int_{\mathbb T^2}\frac{\bm^h\otimes\bm^h}{\rho^h}:\nabla_h^c\bvarphi
+2\|(\nabla \bvarphi)_{\mathrm{sym},-}\|_{L^\infty({\mathbb T^2})}\frac{|\bm^h|^2}{2\rho^h}
\dd x \dd t
\\
&\quad
\geq \int_0^T\int_{\mathbb T^2}\frac{\bm\otimes\bm}{\rho}:\nabla\bvarphi
+2\|(\nabla \bvarphi)_{\mathrm{sym},-}\|_{L^\infty({\mathbb T^2})}\frac{|\bm|^2}{2\rho}
\dd x \dd t.
\end{aligned}
\]
In the same way, we can use the identity $(\gamma-1)P(\rho)=p(\rho)$ to conclude
\[
\begin{aligned}
\liminf_{h\to0}&\int_0^T\int_{\mathbb T^2} p(\rho^h)\bbI  :\nabla_h^c\bvarphi
+\max\{\gamma-1,1\}\|(\dv \bvarphi)_{-}\|_{L^\infty({\mathbb T^2})}P(\rho^h)
\dd x \dd t
\\
&\quad
\geq \int_0^T\int_{\mathbb T^2} p(\rho)\dv\bvarphi
+\max\{\gamma-1,1\}\|(\dv\bvarphi)_{-}\|_{L^\infty({\mathbb T^2})}P(\rho)
\dd x \dd t.
\end{aligned}
\]
For the remaining $\kappa$-related terms,
we use identities of the form 
\[
\begin{aligned}
\Dhmy\rho^h\Dhmy \rho^h(\cdot+h_x\be_x)
&=|\Dhpy\rho^h|^2+\big[\Dhmy\rho^h-\Dhpy\rho^h]\Dhpy\rho^h
\\
&\quad+\Dhmy\rho^h\big[\Dhmy\rho^h(\cdot+h_x\be_x)-\Dhmy\rho^h+\Dhmy\rho^h-\Dhpy\rho^h\big]
\\
&=|\Dhpy\rho^h|^2
-h_y\Dhpy\Dhmy\rho^h\Dhpy\rho^h
\\
&\quad+h_x\Dhpx\Dhmy\rho^h \Dhmy\rho^h - h_y \Dhpy\Dhmy\rho^h \Dhmy\rho^h.
\end{aligned}
\]
The last three terms converge to $0$ strongly in $L^1(0,T;L^1({\mathbb T^2}))$
by H\"older's inequality and~\eqref{eq:limit.D^2h.rho}.
Invoking this and related identities, 
we can thus reduce 
our considerations to lower semicontinuity with respect to forward difference quotients $\Dhpx\rho^h$ and $\Dhpy\rho^h$
appearing in a quadratic way.
Since nonnegative quadratic functions are convex, 
we then conclude from the weak convergence~\eqref{eq:conv.drho.w} that
\[
\begin{aligned}
    \liminf_{h\to0}&\int_0^T\int_{\mathbb T^2} \kappa\Big[\demi (\Dhpx\rho^h)^2 \Dhpx\varphi_x
    + \demi (\Dhpy\rho^h)^2 \Dhpy\varphi_y
    \\
    &\qquad\qquad\quad
    +\Dhmy\rho^h \Dhmy\rho^h(\cdot+h_x\be_x)\big[\Dhpx\varphi_x(\cdot-h_y\be_y)-\demi\Dhpx\varphi_x\big]
    \\
    &\qquad\qquad\quad
    +\Dhmx\rho^h \Dhmx\rho^h(\cdot+h_y\be_y) \big[\Dhpy\varphi_y(\cdot-h_x\be_x)-\demi\Dhpy\varphi_y\big]\Big]
    \\
    &\qquad\qquad+\max\{\gamma-1,1\}\|(\dv\bvarphi)_{-}\|_{L^\infty({\mathbb T^2})}
    \frac{\kappa}{2}\big|\nabla_h^+\rho^h\big|^2 
    \dd x \dd t
    \\
    &\geq \int_0^T\int_{\mathbb T^2} \frac{\kappa}{2} |\nabla\rho|^2\dv\bvarphi
    +\max\{\gamma-1,1\}\|(\dv \bvarphi)_{-}\|_{L^\infty({\mathbb T^2})}
    \frac{\kappa}{2}\big|\nabla\rho\big|^2 
    \dd x \dd t
\end{aligned}
\]
and
\[
\begin{aligned}
    \liminf_{h\to0}&\int_0^T\int_{\mathbb T^2} \kappa \Big[\Dhpx\rho^h \Dhmx\rho^h \Dhcx\varphi_x
    +\Dhpy\rho^h \Dhmy\rho^h \Dhcy\varphi_y
    \\
    &\qquad\qquad\quad
    +\Dhcx\rho^h\, \Dhpy \rho^h\Dhpy\varphi_x
    +\Dhcy\rho^h\, \Dhpx \rho^h\Dhpx\varphi_y\Big]
    \\
    &\qquad\qquad+2\|(\nabla \bvarphi)_{\mathrm{sym},-}\|_{L^\infty({\mathbb T^2})}\frac{\kappa}{2}|\nabla^+_h\rho^h|^2
    \dd x \dd t
    \\
    &\geq \int_0^T\int_{\mathbb T^2} \kappa \nabla\rho\otimes\nabla\rho:\nabla\bvarphi
    +2\|(\nabla\bvarphi)_{\mathrm{sym},-}\|_{L^\infty({\mathbb T^2})}\frac{\kappa}{2}|\nabla\rho|^2
    \dd x \dd t.
\end{aligned}
\]
In summary, we can pass to the limit inferior on the left-hand side of~\eqref{eq:mom.disc.weak.2}
to conclude~\eqref{eq:envar.ek.mom}.
We have thus shown that the limit $(\rho,\bm,\E)$ is an energy-variational solution in the sense of Definition~\ref{def:envar.ek}.

To complete the proof, it remains to show the strong convernce~\eqref{eq:conv.m.s} if $\mu,\eta>0$.
From~\eqref{eq:EK.approx.mom.u} and \eqref{eq:EK.approx.mom.v}, 
we first derive an additional uniform bound for the time derivative of the momentum $\bm^h=\rho^h\bu^h$.
More precisely, consider the weak formulation~\eqref{eq:mom.disc.weak.2}
for a test function $\bvarphi\in C_c^1((0,T);C^2({\mathbb T^2}))$.
As the terms related with $\mathcal K$ cancel each other,
we obtain the estimate
\[
\begin{aligned}
    \int_0^T\int_{\mathbb T^2} \bm^h\cdot\partial_t\bvarphi \dd x \dd t
    &\leq C\Big(\norml{\rho^h|\bu^h|^2}_{L^1(0,T;L^1({\mathbb T^2}))}+
    \norml{p(\rho^h)}_{L^1((0,T)\times{\mathbb T^2})} \Big)\normn{\nabla_h^c\bvarphi}_{L^\infty((0,T)\times{\mathbb T^2})}
    \\
    &\qquad
    +C\lambda h \normn{\bm^h}_{L^\infty(0,T;L^{\frac{2\gamma}{\gamma+1}}({\mathbb T^2}))}\normn{D^2_h\bvarphi}_{L^1(0,T;L^{\frac{2\gamma}{\gamma-1}}({\mathbb T^2}))} 
    \\
    &\qquad
    + C(\mu+\eta)\normn{\bbD^+_h(\bu^h)}_{L^2((0,T)\times{\mathbb T^2})}\normn{\nabla^+_h\bvarphi}_{L^2((0,T)\times{\mathbb T^2})}
    \\
	  &\qquad
    + C\kappa \normn{\rho^h}_{L^2((0,T)\times\mathbb T^2)}
    \normn{\nabla^+_h\rho^h}_{L^\infty(0,T;L^2(\mathbb T^2))}
    \normn{D_h^2\bvarphi}_{L^2(0,T;L^\infty(\mathbb T^2))}
    \\
    &\qquad
    +C\kappa\normn{\nabla^+_h\rho^h}_{L^\infty(0,T;L^2(\mathbb T^2))}^2
    \normn{\nabla^+_h\bvarphi}_{L^1(0,T;L^\infty(\mathbb T^2))}.
\end{aligned}
\]
In virtue of the uniform bounds from Lemma~\ref{lem:bounds},
this shows that $\partial_t\bm^h$ satisfies
\begin{equation}\label{eq:bound.m.dt}
\normn{\partial_t\bm^h}_{L^1(0,T;(W^{1,\infty}(\mathbb T^2))^*)+L^p(0,T;(W^{2,\frac{2\gamma}{\gamma-1}}(\mathbb T^2))^*)}\leq C(1+\lambda h+\frac{h\sqrt{h}}{\sqrt{\lambda}})
\end{equation}
for any $p\in[1,\infty)$,
which gives a uniform bound due to~\eqref{eq:lambda.limit}.
Moreover, by discrete product rule and Sobolev embeddings,
we infer for any $q\in[1,2)$
that
\begin{equation}
\label{eq:bound.m.dx}
\begin{aligned}
&\normn{\nabla^+_h\bm^h}_{L^2(0,T;L^q(\mathbb T^2))}
\\
&\quad\leq \normn{\rho^h}_{L^\infty(0,T;L^p(\mathbb T^2))} \normn{\nabla^+_h\bu^h}_{L^2(0,T;L^2(\mathbb T^2))}
+\normn{\bu^h}_{L^2(0,T;L^p(\mathbb T^2))} \normn{\nabla^+_h\rho^h}_{L^\infty(0,T;L^2(\mathbb T^2))}
\\
&\quad\leq C\normn{\nabla^*_h\rho^h}_{L^\infty(0,T;L^2(\mathbb T^2))} \normn{\nabla^*_h\bu^h}_{L^2(0,T;L^2(\mathbb T^2))}
\end{aligned}
\end{equation}
where $\frac{1}{p}+\frac{1}{2}=\frac{1}{q}$.
Due to~\eqref{eq:bounds.direct}
and the Korn-type inequality~\eqref{eq:KornPoincare},
the right-hand side is uniformly bounded.
Here, we use that the discretization scheme preserves the total momentum 
by Theorem~\ref{2Dconservation}.
As above, we can now invoke  
the discrete Rellich--Kondrachov theorem
and the Aubin--Lions lemma,
and conclude the strong convergence of a subsequence such that~\eqref{eq:conv.m.s}
for all $q\in[1,2)$.
This completes the proof.
\end{proof}
    
\section{Acknowledgement}
Financial support by the German Research Foundation (DFG), within  the projects No. 525866748, No. 525877563, and No. 526018747 of the Priority Programme - SPP 2410 Hyperbolic Balance Laws in Fluid Mechanics: Complexity, Scales, Randomness (CoScaRa) is acknowledged.
T.~Eiter's research has further been funded by DFG through grant CRC 1114 ``Scaling Cascades in Complex Systems'', Project Number 235221301, Project YIP. J. Giesselmann's research has further been funded by DFG through grant CRC TRR 154, Project Number 239904186, Project C05.

\bibliographystyle{abbrvurl}
\bibliography{literature}

\end{document}